\font \eightrm=cmr8
\newcommand{\nc}{\newcommand}
\nc\smsc{0.8}
\nc\delete[1]{}
\nc{\mlabel}[1]{\label{#1}}  
\nc{\mcite}[1]{\cite{#1}}  
\nc{\mref}[1]{\ref{#1}}  
\nc{\mbibitem}[1]{\bibitem{#1}} 
\nc{\mlabel}[1]{\label{#1}  
{\hfill \hspace{1cm}{\small\tt{{\ }\hfill(#1)}}}}
\nc{\mcite}[1]{\cite{#1}{\small{\tt{{\ }(#1)}}}}  
\nc{\mref}[1]{\ref{#1}{{\tt{{\ }(#1)}}}}  
\nc{\mbibitem}[1]{\bibitem[\bf #1]{#1}} 
\nc{\mop}[1]{\mathop{\hbox {\rm #1} }}
\nc{\smop}[1]{\mathop{\hbox {\eightrm #1} }}
\nc{\mopl}[1]{\mathop{\hbox {\rm #1} }\limits}
\nc{\smopl}[1]{\mathop{\hbox {\eightrm #1} }\limits}
\def \restr#1{\mathstrut_{\textstyle |}\raise-8pt\hbox{$\scriptstyle #1$}}
\def \srestr#1{\mathstrut_{\scriptstyle |}\hbox to
  -1.5pt{}\raise-4pt\hbox{$\scriptscriptstyle #1$}}
\nc{\wt}{\widetilde}
\nc{\wh}{\widehat}
\newtheorem{theorem}{Theorem}[section]
\newtheorem{definition}{Definition}[section]
\newtheorem{corollary}{Corollary}[section]
\newtheorem{proposition}{Proposition}[section]
\newtheorem{lemma}{Lemma}[section]
\newtheorem{remark}{Remark}[section]
\newtheorem{example}{Example}[section]
\numberwithin{equation}{section}
\newcommand\alphlist{a,b,c,d,e,f,g,h,i,j,k,l,m,n,o,p,q,r,s,t,u,v,w,x,y,z}
\newcommand\Alphlist{A,B,C,D,E,F,G,H,I,J,K,L,M,N,O,P,Q,R,S,T,U,V,W,X,Y,Z}
\newcommand\getcmds[3]{\expandafter\newcommand\csname #2#1\endcsname{#3{#1}}}
\alphlist\do{\expandafter\getcmds\expandafter{\x}{frak}{\mathfrak}}
\Alphlist\do{\expandafter\getcmds\expandafter{\x}{frak}{\mathfrak}}
\nc{\bfk}{{\bf k}}
\nc{\sha}{\shuffle}
\nc{\id}{\mathrm{id}}
\nc{\Id}{\mathrm{Id}}
\nc{\lbar}[1]{\overline{#1}}
\nc{\ot}{\otimes}
\nc{\dep}{\mathrm{dep}}
\nc{\ver}{\mathrm{ver}}
\nc{\tred}[1]{\textcolor{red}{#1}} \nc{\tgreen}[1]{\textcolor{green}{#1}}
\nc{\tblue}[1]{\textcolor{blue}{#1}} \nc{\tpurple}[1]{\textcolor{purple}{#1}}
\nc{\tcyan}[1]{\textcolor{cyan}{#1}} 
\nc{\tblk}[1]{\textcolor{black}{#1}}
\nc{\li}[1]{\tpurple{\underline{Li:}#1 }}
\nc{\liadd}[1]{\tpurple{#1}}
\nc{\xing}[1]{\tblue{\underline{Xing:}#1 }}
\nc{\yuan}[1]{\tred{\underline{Yuan:}#1 }}
\nc{\markus}[1]{\tred{\underline{Markus:} #1}}
\nc{\dominique}[1]{\tpurple{\underline{Dominique: }#1 }}
\long\def\ignore#1{}
\tikzset{
baseon/.style={baseline={($(#1)+(0,-0.58ex)$)}},
baseon/.default=current bounding box.center,
every picture/.style=baseon,
lst/.style={},
dst/.style={circle,inner sep=1pt,outer sep=0pt,fill,draw,dst2},
dst2/.style={fill=white},
ddst/.style={diamond,draw,inner sep=1pt},
eest/.style={ellipse,draw,inner sep=1pt,minimum size=2ex},
}
\def\zzz#1`#2...#3`#4...#5`#6@{%
--++(#1)
node[dst,label={#5:$#6$},name=#2]{}
node[midway,\hbox{Aut}o,#3]{$#4$}
}
\def\ddd#1`#2`#3@{+(#1)node[ddst,name=#2]{$#3$}}
\def\eee#1`#2`#3@{+(#1)node[eest,name=#2]{$#3$}}
\def\xxx#1`#2@{node[midway,\hbox{Aut}o,inner sep=1pt,#1]{$#2$}}
\def\pp#1`#2`#3@{node[dst,label={#2:$#3$},pos=#1]{}}
\def\oo#1`#2`#3@{\path (o) node[dst,label={#2:$#3$},name=o,#1]{};}
\def\eoo#1`#2@{\node[eest,name=o,#1] at (o) {$#2$};}
\newif\ifshowjdq
\newcommand\setXXclip[3]{%
\def\XXheight{#1}\def\XXdepth{#2}\def\XXwidth{#3}}
\newcommand\simra{\mathrel{\mathpalette\@verra\sim}}
\def\@verra#1#2{\lower.5\p@\vbox{\lineskiplimit\maxdimen \lineskip-.5\p@
\ialign{$\m@th#1\hfil##\hfil$\crcr#2\crcr\rightarrow\crcr}}}
\nc{\dnx}{\Delta_n A} \nc{\dx}{\Delta A} \nc{\dgp}{{\rm deg_{P}}}
\nc{\dgt}{{\rm deg_{T}}} \nc{\dg}{{\rm deg}} \nc{\ida}{ID($A$)} \nc{\tu}{\tilde{u}} \nc{\tv}{\tilde{v}}
\nc{\nr}{\calr_n} \nc{\nz}{\calz_n} \nc{\fun}{\cala_{n,d}}
 \nc{\fbase}{\calb} \nc{\LF}{\mathrm{RF}} \nc{\FFA}{\mathrm{LF}} \nc{\irr}{\mathrm{Irr}}
 \nc{\result}{\bfk\mathrm{Irr}(S_n)}  \nc{\I}{I_{\mathrm{ID},n}^0}
 \nc{\nrs}{\calr_n^\star} \nc{\ii}{\mathrm{I}} \nc{\iii}{\mathrm{II}}
\nc{\intl}{{\rm int}}\nc{\ws}[1]{{#1}}\nc{\deleted}[1]{\delete{#1}}\nc{\plas}{placements\xspace}
\nc{\bim}[1]{#1}  \nc{\shaop}{\sha_{\Omega}^{+}}  \nc{\shao}{\sha_{\Omega}}
\nc{\bbim}[2]{#1 #2} \nc{\bbbim}[2]{#1,\, #2} \nc{\RBF}{{\rm RBF}}
\nc{\frb}{F_{\RB}} \nc{\shaf}{\ssha_{\tiny{\Omega}}} \nc{\sham}{\diamond_{\tiny{\Omega}}}
\nc{\lf}{\lfloor} \nc{\rf}{\rfloor} \nc{\shan}{\ssha_{\lambda}}
\nc{\rlex}{{\rm {lex}}} \nc{\bb}{\Box} \nc{\ra}{\rightarrow}
\nc{\e}{{\rm {e}}}
\nc{\DDF}{\mathrm{DD}(X,\,\Omega)}\nc{\DTF}{\mathrm{DT}(X,\,\Omega)} \nc{\DT}{\mathrm{DT}'(\Omega,\,V)}
\nc{\bra}{\mathrm{bra}} \nc{\bre}{\mathrm{bre}}
\nc{\dec}{\mathrm{dec}} \nc{\diamondw}{\diamond_{w}}
\nc{\type}{\mathrm{type}}
\nc\calt{\cal{T}(X,\,\Omega)} \nc\caltn{\cal{T}_n(X,\,\Omega)}
\nc\calta{\cal{T}_0(X,\,\Omega)}
\nc\caltb{\cal{T}_1(X,\,\Omega)}
\nc\caltc{\cal{T}_2(X,\,\Omega)}
\nc\caltd{\cal{T}_3(X,\,\Omega)}
\nc\caltm{\cal{T}_m(X,\,\Omega)}
\nc\caltx{\cal{T}(X)}
\nc\calf{\cal{F}(X,\,\Omega)}
\nc\fram{\frak{M}(\Omega,\, X)}
\nc\shaw{\sha^{NC}_w(\Omega,\, X)}
\nc\dw{\diamond_w} \nc\dl{\diamond_\ell}
\nc\shal{\sha^{NC}_\ell(X,\, \Omega)} \nc\shav{\sha^{NC}_w(\Omega,\, V)} \nc\shat{\sha^{NC,1}_w(\Omega,\, T^{+}(V))}
\nc{\cfo}{\cal{F}(X,\,\Omega)}
\nc{\sh}{\rm{Sh}}
\nc{\lar}{\varinjlim}
\def\cxo#1#2;{\cal{#1}#2\XO}
\nc\lrf[2]{B_{#2}^+(#1)}
\nc{\fd}{\mathrm{\text{typed angularly decorated planar rooted trees}}}
\nc{\rb}{\mathrm{RBFWs}} \nc{\dfw}{\mathrm{DFW{(X)}}} \nc{\tfw}{\mathrm{TFW{(X)}}}
\nc{\tfv}{\mathrm{TFW{(V)}}}
\def\Ve#1,#2,#3;{\vee_{#1,\,(#2,\,#3)}}
\def\bigv#1;#2;#3;{\bigvee\nolimits_{#1}^{#2;\,#3}}
\nc\rjt[2]{\mathrel{\mathop{\longrightarrow}\limits^{#1\hfill}_{\hfill#2}}}
\nc{\pl}{\cal{PLF}}
\nc{\tr}{\cal{RTF}}
\nc{\im}{\mathrm{Im}}
\nc{\ff}{\cal{F}_\Omega}
\nc{\tm}{T_\Omega}
\nc{\calp}{\cal{P}}
\nc\dd{\@ifnextchar'{\ddA}{\ddB}}
\def\ddA'#1;{\rhd'_{#1\,}}
\def\ddB#1;{\rhd_{#1\,}}
\nc{\pbt}{\mathrm{PBT}}
\nc{\ad}{\mathrm{ad}}
\begin{document}

\title[Free pre-Lie algebras of finite posets]{Free pre-Lie algebras of finite posets}
\thispagestyle{empty}
\author{M. Ayadi}
\address{Laboratoire de Math\'ematiques Blaise Pascal,
CNRS--Universit\'e Clermont-Auvergne,
3 place Vasar\'ely, CS 60026,
F63178 Aubi\`ere, France, and University of Sfax, Faculty of Sciences of Sfax,
Laboratory of Applied Mathematics and Harmonic Analysis, route de Soukra,
3038 Sfax, Tunisia.}
\email{mohamed.ayadi@etu.uca.fr}

\tikzset{
			stdNode/.style={rounded corners, draw, align=right},
			greenRed/.style={stdNode, top color=green, bottom color=red},
			blueRed/.style={stdNode, top color=blue, bottom color=red}
		} 
		
	\begin{abstract}
	We first recall the construction of a twisted pre-Lie algebra structure on the species of finite connected topological spaces. Then, we construct a corresponding non-coassociative permutative (NAP) coproduct on the subspecies of finite connected $T_0$ topological spaces, i.e., finite connected posets, and we prove that the vector space generated by isomorphism classes of finite posets
	is a free pre-Lie algebra and also a cofree NAP coalgebra. Further, we give an explicit duality between the non-associative permutative product and the proposed NAP coproduct. Finally, we prove that the results in this paper remain true for the finite connected topological spaces.
	\end{abstract}
	
\keywords{ Bialgebras, bimonoids, finite topological spaces, Hopf algebras, species.}
\subjclass[2010]{16T05, 16T10, 16T15, 16T30, 06A11.}
\maketitle
\tableofcontents
	\section{Introduction}
	In this paper, we introduce a twisted non-associative
permutative algebra structure on the species of finite connected posets. We recall the definition of a non-associative permutative algebra (in short NAP algebra), and the dual definition of a non-coassociative permutative coalgebra (in short NAP coalgebra).
	A (left) NAP algebra is a vector space $V$ equipped with
a bilinear product $\cdot$ satisfying the relation:
\begin{align}\label{NAP algebra}
    a\cdot(b\cdot c)=b\cdot(a\cdot c), \hbox{ for all } a, b ·\hbox{ and }c \in V.
\end{align}
We note also that the notion of NAP algebra has emerged from the reference \cite{L. Muriel} by M. Livernet. It has also been previously defined in \cite{A. Dzhumadil} by A. Dzhumadil’daev, and C. L{\"o}fwall under the name of "left commutative algebra".
 Dually a (left) NAP coalgebra is a vector space $V$ equipped with
a bilinear coproduct $\delta$: $V\to V\otimes V$ satisfying the relation:
\begin{align}\label{NAP coalgebra}
    (Id \otimes \delta)\delta=\tau^{12}(Id \otimes \delta)\delta.
\end{align}
We recall the following rigidity theorem \cite{L. Muriel}: any pre-Lie algebra $(V, \rhd)$, together with a non-associative permutative connected coproduct $\delta$ satisfying the distributive law
\begin{equation}\label{equ}
   \delta(a\rhd b)=a\otimes b+(a\otimes \mathbf 1+\mathbf 1\otimes a)\rhd \delta(b), 
\end{equation}
is a free pre-Lie algebra and a cofree NAP coalgebra.\\

 We recall also in this paper the description of the free pre-Lie algebra in terms of rooted trees given in \cite{acg5}. We define a new bilinear product $\circleright$ in the species of finite connected posets $\mathbb{U}$ by: for all $P\in \mathbb{U}_{X_1}$ and $Q\in \mathbb{U}_{X_2}$, where $X_1$ and $X_2$ are two finite sets
	 $$P\circleright Q=\sum_{v\in \tiny{\hbox{min}}(Q)}P\searrow_v Q,$$
	  where  $P\searrow_v Q$ is
obtained from the Hasse graphs $G_1$ and $G_2$ of $P$ and $Q$ by adding an (oriented)
edge from $v$ in $G_2$ to any minimal vertex of $G_1$\cite{Moh. twisted}. \\

	 We show that the  bilinear product $ \circleright $ verifies \eqref{NAP algebra} in the monoidal category of species, and therefore endows the species $\mathbb U$ with a NAP algebra structure.\\
	 
	 
	 For any finite set $X$, we define the coproduct $\delta$ by:
	\begin{eqnarray*}
		\delta:\mathbb{U}_X&\longrightarrow& (\mathbb{U}\otimes \mathbb{U})_X=\bigoplus_{Y\sqcup Z= X}\mathbb{U}_{Y}\otimes\mathbb{U}_{Z}\\
		P&\longmapsto& \frac{1}{|min(P)|}\sum_{I\circledcirc P} I \otimes P\backslash I,
	\end{eqnarray*}
	where $I\circledcirc P$ means that $I$ is a subset of $P$ such that:
	\begin{itemize}
	\item $I_-$ is a singleton included in $\hbox{min}(P)$,
    \item and $I$ is a connected component of the set $\{x\in P, I_-<_P x \}$.
\end{itemize}
with the set $I_-$ is equal to the space $ \{x\notin I, \hbox{ there exists } y \in I \hbox{ such that } x \le_{P} y \} $. 

	We prove that the coproduct $\delta$ is a twisted NAP coproduct, i.e., \eqref{NAP coalgebra} is verified in the monoidal category of species.\\

M. Livernet in \cite{L. Muriel}
proved the following rigidity theorem: for any pre-Lie algebra $(V, \rhd)$ together with a NAP connected coproduct $\delta$ satisfying relation \eqref{equ}, $V$ is the free pre-Lie algebra generated by $\hbox{Prim}(V)$.\\

	 Later in this paper, we prove a compatibility relation between the pre-Lie $\searrow$ structure and the coproduct $\delta$ by proving the twisted version of \eqref{equ} namely: 
	 $$\delta(P\searrow Q)=P\otimes Q+(P\otimes \mathbf 1+\mathbf 1\otimes P)\searrow \delta(Q),$$
	 for any pair $(P, Q)$ of finite connected posets, where the unit $\mathbf 1$ is identified to the empty poset.

	 Applying Aguiar-Mahajan's bosonic Fock functor $\overline{\mathcal{K}}$ \cite{acg10} and M. Livernet's rigidity theorem \cite{L. Muriel} leads to the main result of the paper: $\big(\overline{\mathcal{K}}(\mathbb{U}), \searrow \big)$
		 endowed with the coproduct $\delta$ is a free pre-Lie algebra and a cofree NAP coalgebra.
		 
		We end up Section \ref{Free} by proving that the NAP product $\circleright$ and the  NAP coproduct $\delta$ are dual to each other, and we show that $(\overline{\mathcal{K}}(\mathbb{P}), \Delta_{\searrow}) $ is a coassociative cofree coalgebra, where $\Delta_{\searrow}$ is the coassociative coproduct defined in \cite{Moh. twisted}. \\ 
		 
		In the last section, we prove that the results in this paper remain true for the finite connected topological spaces, with a small change on the definition of the coproduct $\delta$.
\section{Basics of finite topologies}
A partial order on a set $X$ is a transitive, reflexive and antisymmetric relation on $X$.
A finite poset is a finite set $X$ endowed with a partial order $\le$.
Let $P=(X, \le_{P})$ and $Q=(X, \le_{Q})$ be two posets. We say that $P$ is finer than $Q$ if:
$x\le_{P}y \Longrightarrow x\le_{Q}y$ for any $x, y\in X$.
 The Hasse diagram of poset $P=(X, \le_{P})$
is obtained by representing any element of $X$ by a vertex, and by drawing a directed edge from $a$
to $b$ if and only if $a<_{P} b$, and, for any $c\in X$ such that $a\le_{P} c\le_{P} b$,
one has $a=c$ or $b=c$.
We can say that $I$ is an upper ideal of $P$ $(I\subset P)$ if, for all $x, y\in P$, $(x\in I, x\le_P y)\Longrightarrow y\in I$. We denote $J(P)$ the set of upper ideals of $P$.\\

Recall \cite{P. Alex, F. L. D.} that a finite topological space is a finite quasi-poset and vice versa.
	Any topology $\mathcal{T}$ (hence any quasi-order on $X$) gives rise to an equivalence relation:
	\begin{equation}
	x \sim_{\mathcal{T}}y\Longleftrightarrow \left( x\leq_{\mathcal{T}}y \hbox{ and } y\leq_{\mathcal{T}}x \right).
	\end{equation}
Equivalence classes will be called bags here. This equivalence relation is trivial if and only if the quasi-order is a (partial) order, or equivalently, if the corresponding topology is $T_0$. Any
topology $\mathcal{T}$ on $X$ defines a poset structure on the quotient $X/\sim_{\mathcal{T}}$, corresponding to the partial order induced by the quasi-order $\le_{\mathcal{T}}$. More on finite topological spaces can be found in \cite{J. A., R. E., Moh. Doubling, F. L. D.}.\\

Recall \cite{acg...3, acg10} that a linear (tensor) species is a contravariant functor from the category
	of finite sets $\mathbf{Fin}$ with bijections into the category $\mathbf{Vect}$ of vector spaces (on some field $\mathbf{k}$). 
	Let $\mathbb{E}$ be a linear species. We note $\overline{\mathcal{K}}(\mathbb{E})=\bigoplus \limits_{\underset{}{n\geq 0}} \mathbb{E}_n/S_n$, where  $\mathbb{E}_n/S_n$ denotes the space of $S_n$-coinvariants of $\mathbb{E}_n$.
	The functor $\overline{\mathcal{K}}$ from linear species to graded vector spaces is intensively studied in \cite[chapter 15]{acg10} under the name "bosonic Fock functor". \\
	
	The species $\mathbb{T}$ of finite topological spaces is defined as follows: for any finite set $X$, $\mathbb{T}_X$  is
	the vector space freely generated by the topologies on $X$. For any bijection
	$\varphi : X \longrightarrow  X^{\prime } $, the isomorphism $\mathbb{T}_{\varphi } : \mathbb{T}_{ X^{\prime}} \longrightarrow \mathbb{T}_X$ is defined
	for any topology $\mathcal{T}$ on $X^{\prime }$,
	by the following
	relabelling:
	$$\mathbb{T}_{\varphi }(\mathcal{T})=\{ \varphi^{-1}(Y), Y\in  \mathcal{T}  \}$$ 
	\\
	It is well-known that the species $\mathbb{T}$ of finite topological spaces is a
twisted commutative Hopf algebra \cite{F. L. D.}: the product $m$ is given by disjoint union, and the coproduct $\Delta$ is a natural generalization of the Connes-Kremier coproduct on rooted forests.
We defined another twisted Hopf algebra structure in \cite{Moh. twisted} by replacing $\Delta$ with the coproduct
\begin{eqnarray*}
		\Delta_{\searrow}:\mathbb{T}_X&\longrightarrow& (\mathbb{T}\otimes \mathbb{T})_X=\bigoplus_{Y\sqcup Z= X}\mathbb{T}_{Y}\otimes\mathbb{T}_{Z}\\
		\mathcal{T}&\longmapsto& \sum_{Y \overline{\in}  \mathcal{T}} \mathcal{T}_{|Y} \otimes \mathcal{T}_{|X\backslash Y}.
	\end{eqnarray*}
			Where $Y \overline{\in}  \mathcal{T}$, stands for
\begin{itemize}
    \item $Y\in \mathcal{T}$,
    \item $\mathcal{T}_{|Y}=\mathcal{T}_1...\mathcal{T}_n$, such that for all $i\in \{1,..., n\}, \mathcal{T}_i$ is connected and \big($\hbox{min}\mathcal{T}_i=(\hbox{min}\mathcal{T})\cap \mathcal{T}_i$, or there is a single common ancestor $x_i \in \overline{X\backslash Y}$ to $\hbox{min}\mathcal{T}_i$\big), where $\overline{X\backslash Y}=(X\backslash Y)/\sim_{\mathcal{T}_{|X\backslash Y}}$.
\end{itemize}
If we apply the functor $\overline{\mathcal{K}}$, we therefore two commutative graded connected Hopf algebras
$\overline{\mathcal{K}}(\mathbb{T}, m, \Delta)$ and $\overline{\mathcal{K}}(\mathbb{T}, m, \Delta_{\searrow})$.
	\begin{definition}
    \cite{acg4, acg3} A left pre-Lie algebra over a field $\mathbf{k}$ is a $\mathbf{k}$-vector space $A$ with a binary composition $\rhd$ that satisfies the left pre-Lie identity:
    \begin{equation}
        (x\rhd y)\rhd z-x\rhd (y\rhd z)=(y\rhd x)\rhd z-y\rhd (x\rhd z),
    \end{equation}
    for all $x, y, z\in A$. The left pre-Lie identity rewrites
as:
\begin{equation}
L_{[x,y]}=[L_x, L_y],
\end{equation}
where $L_x:A\longrightarrow A$ is defined by $L_xy=x\rhd y$, and where the bracket on the left-hand side is
defined by $[x,y]=x\rhd y-y\rhd x$. As a consequence this bracket satisfies the Jacobi identity.
\end{definition}
In fact, as mentioned in \cite{Moh. twisted}, the structure $\searrow$ on the species of connected finite topological spaces $\mathbb{V}$ defined by: for all $\mathcal{T}_1=(X_1, \leq_{\mathcal{T}_1})$ and $\mathcal{T}_2=(X_2, \leq_{\mathcal{T}_2})$ two  finite connected topological spaces as: 
$$ \mathcal{T}_1\searrow \mathcal{T}_2=\sum_{v\in X_2}\mathcal{T}_1\searrow_v \mathcal{T}_2,$$
 is a pre-Lie structure, where $\mathcal{T}_1\searrow_v \mathcal{T}_2$ is
obtained from the Hasse graphs $G_1$ and $G_2$ of $\mathcal{T}_1$ and $\mathcal{T}_2$ by adding an (oriented)
edge from $v$ in $G_2$ to any minimal vertex of $G_1$. \\

	We denote by $\mathbb{P}$ the sub-species of $\mathbb{T}$ consisting in partial order (i.e., $T_0$-topologies).
The binary product $m$ (resp. $\searrow$) restricts to finite posets (resp. finite connected posets), as well as both coproducts $\Delta$ (resp. $\Delta_{\searrow}$). Hence the triple
$(\mathbb{P}, m, \Delta)$ is a twisted Hopf subalgebra of $(\mathbb{T}, m, \Delta)$, the triple $(\mathbb{P}, m, \Delta_{\searrow})$ is a twisted Hopf subagebra of $(\mathbb{T}, m, \Delta_{\searrow})$ and $(\mathbb{P}, \searrow)$ is a twisted pre-Lie subalgebra of  $(\mathbb{T}, \searrow)$.\\

	\section{Free pre-Lie algebras and cofree coalgebras}\label{Free}
		\subsection{Free pre-Lie algebras and rooted trees}
	Let $T$ be the vector space spanned by the set of isomorphism classes of rooted trees and $H=S(T)$.
	Grafting pre-Lie algebras of rooted trees were studied for the first time by F. Chapoton and M. Livernet \cite{acg5}, although the use of rooted trees can be traced back to A. Cayley \cite{A. Cayley}. The grafting product is given, for all $t, s \in T$, by:
	\begin{equation}
	    t\rightarrow s=\sum_{s' \hbox{ \tiny{vertex of} }  s} t\rightarrow_{s'}s,
	\end{equation}
	where $t\rightarrow_{s'} s$ is the tree obtained by grafting the root of $t$ on the vertex $s'$ of $s$. In other words, the operation $t\rightarrow s$ consists of grafting the root of $t$ on every vertex of $s$ and summing up.\\
	
	We note $T(n)$ the linear span of rooted trees of degree $n$. For a vector space $V$, we denote by $T(V)$ the space $\bigoplus \limits_{n}T(n)\otimes V^{\otimes n}/S_n$. This is the linear span of rooted trees decorated by a basis of $V$. Here $S_n$ is the symmetric group on $n$ elements. 	Following the notation of Connes and Kreimer \cite{C. Alain}, any tree $t$ writes
$t:= B(v, t_1, t_2, ..., t_n)$ that is decorated by $v$, and $t_1, ..., t_n$ are trees.
	
	We notice that, if using the pre-Lie product $\rightarrow$ in $T(V)$, one has:
	$$B(v, t_1,...,t_n)=t_n \rightarrow B(v, t_1,...,t_{n-1}) - \sum_{0<k<n}B(v, t_1,..., t_n\rightarrow t_k,...,t_{n-1})$$
	\begin{proposition}\cite{acg5}
	Equipped by $\rightarrow$, the space $T$ is the free pre-Lie algebra with one generator.
	\end{proposition}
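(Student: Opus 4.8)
The plan is to establish the universal property directly: for every left pre-Lie algebra $(A,\rhd)$ and every element $a\in A$ there is a unique morphism of pre-Lie algebras $\Phi_a\colon T\to A$ with $\Phi_a(\bullet)=a$, where $\bullet$ is the one-vertex tree, and then to read off that $T$ is free on $\bullet$. A preliminary routine step is to check that $\rightarrow$ really is a pre-Lie product: expanding $(t\rightarrow s)\rightarrow u-t\rightarrow(s\rightarrow u)$ amounts to grafting $s$ onto a vertex of $u$ and then grafting $t$, and separating the cases according to whether $t$ lands on a vertex coming from $s$ or on a vertex coming from $u$ gives $(t\rightarrow s)\rightarrow u-t\rightarrow(s\rightarrow u)=-\sum_{v,w\in V(u)} t\rightarrow_w(s\rightarrow_v u)$, where $V(u)$ is the vertex set of $u$; this last expression is visibly symmetric in $t$ and $s$ (reindex $v\leftrightarrow w$), so the left pre-Lie identity holds. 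I would not dwell on this computation.

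Next I would show that $T$ is generated as a pre-Lie algebra by $\bullet$, by induction on the number of vertices, using exactly the identity recorded just before the statement, $B(v,t_1,\dots,t_n)=t_n\rightarrow B(v,t_1,\dots,t_{n-1})-\sum_{0<k<n}B(v,t_1,\dots,t_n\rightarrow t_k,\dots,t_{n-1})$ (with trivial decorations), which writes every tree with $n\ge 2$ vertices as a pre-Lie polynomial in trees of strictly fewer vertices. Descending, every tree is obtained from $\bullet$; in particular this same recursion shows that a pre-Lie morphism $\Phi_a$, if it exists, is uniquely determined by $\Phi_a(\bullet)=a$. This settles uniqueness.

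The remaining, genuinely harder point is existence, equivalently the statement that the tautological surjection from the free pre-Lie algebra on one generator onto $T$ (given by the generation step) has no kernel. I see two routes. The operadic one, which is the original argument in \cite{acg5}: identify the pre-Lie operad $\mathrm{PreLie}$ with the operad $\mathcal{RT}$ whose arity-$n$ component is spanned by rooted trees on the labelled set $\{1,\dots,n\}$, with operadic (partial) composition $t\circ_i s$ given by substituting $s$ at the vertex $i$ of $t$, attaching the incoming edge of $i$ to the root of $s$ and summing over all ways of reattaching each former child-edge of $i$ to a vertex of $s$, the generator $\rhd\in\mathcal{RT}(2)$ being the two-vertex tree rooted at $2$ with child $1$. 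One checks this assignment kills the pre-Lie relation (the same symmetry computation as above), hence defines a surjection $\mathrm{PreLie}\twoheadrightarrow\mathcal{RT}$, and then proves it is an isomorphism by comparing Hilbert series, $\dim\mathcal{RT}(n)=n^{n-1}$ by Cayley's formula matching the known $\dim\mathrm{PreLie}(n)=n^{n-1}$. Passing to $S_n$-coinvariants and specializing to $V=\mathbf{k}\,\bullet$, the free-algebra functor becomes $\bigoplus_n\mathcal{RT}(n)/S_n=\bigoplus_n\{\text{isomorphism classes of rooted trees on }n\text{ vertices}\}=T$, and operadic composition with $\rhd$ becomes precisely the grafting product $\rightarrow$. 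The alternative, hands-on route is to define $\Phi_a$ on trees by the recursion above and verify directly that it is well defined — the delicate point being independence of which subtree $t_n$ hanging from the root is singled out, which must be reconciled with the symmetry of $B(v,t_1,\dots,t_n)$ in the $t_i$ using the pre-Lie identity in $A$ — and then that $\Phi_a(x\rightarrow y)=\Phi_a(x)\rhd\Phi_a(y)$.

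The main obstacle, on either route, is exactly this injectivity: proving that $T$ is not a proper quotient of the free pre-Lie algebra on one generator, i.e.\ that the grafting of rooted trees carries no hidden relations beyond the pre-Lie identity. I would handle it through the Hilbert-series comparison in the operadic picture, since the count $n^{n-1}$ of labelled rooted trees is classical and the Poincaré series of the free pre-Lie algebra on one generator is computed independently (from the operadic, respectively Gröbner-basis, description of $\mathrm{PreLie}$), and the two coincide; the generation step above then upgrades the surjection to an isomorphism.
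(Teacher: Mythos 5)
The paper offers no proof of this proposition: it is imported from \cite{acg5} as background, so there is no in-paper argument to measure you against, and I can only assess your sketch on its own terms. Its architecture --- verify the pre-Lie identity for grafting, prove generation by $\bullet$ via the recursion $B(v,t_1,\dots,t_n)=t_n\rightarrow B(v,t_1,\dots,t_{n-1})-\sum_{0<k<n}B(v,t_1,\dots,t_n\rightarrow t_k,\dots,t_{n-1})$, then show the resulting surjection from the free pre-Lie algebra on one generator has trivial kernel --- is the right one, and your symmetry computation $(t\rightarrow s)\rightarrow u-t\rightarrow(s\rightarrow u)=-\sum_{v,w}t\rightarrow_w(s\rightarrow_v u)$ is correct. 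One bookkeeping error in the generation step: the recursion does \emph{not} express $B(v,t_1,\dots,t_n)$ in terms of trees with strictly fewer vertices. The correction terms $B(v,\dots,t_n\rightarrow t_k,\dots)$ have exactly as many vertices as the original tree, only one fewer branch at the root; the descent does terminate, but the induction must be run on the pair (number of vertices, number of root branches) ordered lexicographically, and as stated your induction hypothesis does not cover those terms.

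The more serious issue is the injectivity step, which you correctly single out as the crux but then defer. The Hilbert-series route you designate as primary rests entirely on the claim that $\dim\mathrm{PreLie}(n)=n^{n-1}$ is ``computed independently,'' and this is exactly where circularity threatens: the usual derivation of that count \emph{is} the rooted-tree model you are trying to establish, while the genuinely independent derivations (operadic Gr\"obner bases, or Koszulness of $\mathrm{PreLie}$ together with the functional equation relating its generating series to that of its Koszul dual $\mathrm{Perm}$) each require machinery at least as heavy as the theorem itself and are not what \cite{acg5} does --- contrary to your attribution, the dimension formula appears there as a \emph{corollary} of the isomorphism, not as an input. The original argument is in essence your ``alternative, hands-on route'': one verifies the universal property directly, defining $\Phi_a$ on $B(v,t_1,\dots,t_n)$ by the recursion and proving well-definedness --- independence of which branch is singled out as $t_n$ --- from the pre-Lie identity in the target algebra. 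That well-definedness lemma is the self-contained core of the proof, and it is the one step your sketch should actually carry out rather than set aside; once it is done, existence, uniqueness and injectivity all follow and no dimension count is needed.
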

	\begin{proposition}\cite{L. Muriel}
	    Let $V$ be a vector space. Then, $T(V)$ together with the coproduct
	    $$\delta\big(B(v, t_1, t_2,...,t_n)\big)=\sum_{0<k<n+1}t_k \otimes B(v, t_1,..., \hat{t}_k,...,t_n),$$
	    is the cofree NAP connected coalgebra generated by $V$.
	\end{proposition}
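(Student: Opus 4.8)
The plan is to bring the triple $(T(V),\rightarrow,\delta)$ under the rigidity theorem of \cite{L. Muriel} quoted in the introduction, and then to read off its primitive part. By \cite{acg5}, $(T(V),\rightarrow)$ is the free pre-Lie algebra generated by $V$, so in particular $\rightarrow$ is a left pre-Lie product. What remains is: (i) $\delta$ is a well-defined connected NAP coproduct on $T(V)$; (ii) $\delta$ satisfies the distributive law \eqref{equ} with $\rhd=\rightarrow$; (iii) $\mathrm{Prim}(T(V),\delta)=V$.

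Point (i) is formal. The right-hand side of the formula defining $\delta\big(B(v,t_1,\dots,t_n)\big)$ is symmetric in $t_1,\dots,t_n$, hence independent of the chosen presentation of the tree, so $\delta$ descends to $T(V)=\bigoplus_n T(n)\otimes V^{\otimes n}/S_n$. Applying $\Id\otimes\delta$ to $\delta\big(B(v,t_1,\dots,t_n)\big)=\sum_k t_k\otimes B(v,t_1,\dots,\hat t_k,\dots,t_n)$ yields $\sum_{k\neq l}t_k\otimes t_l\otimes B(v,t_1,\dots,\hat t_k,\dots,\hat t_l,\dots,t_n)$, which is invariant under $\tau^{12}$, so \eqref{NAP coalgebra} holds. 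Grading $T(V)$ by the number of vertices, each tensor factor of $\delta(t)$ has strictly fewer vertices than $t$, so $\delta$ is conilpotent, hence connected. Finally $\delta(t)=0$ exactly when $t$ is a one-vertex tree, so $\ker\delta$ is the span of the trees $B(v)$, $v$ ranging over a basis of $V$, which is canonically $V$; this gives (iii).

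The only computation is (ii): $\delta(t\rightarrow s)=t\otimes s+(t\otimes\mathbf 1+\mathbf 1\otimes t)\rightarrow\delta(s)$, which I would do by expanding both sides and matching, no induction needed. Writing $s=B(v,s_1,\dots,s_m)$, split $t\rightarrow s=\sum_{s'}t\rightarrow_{s'}s$ into the root term $t\rightarrow_v s=B(v,s_1,\dots,s_m,t)$ and, for each $i$, the branch terms, whose sum is $B(v,s_1,\dots,s_{i-1},t\rightarrow s_i,s_{i+1},\dots,s_m)$ since $\sum_{s'\in s_i}t\rightarrow_{s'}s_i=t\rightarrow s_i$. Applying $\delta$: the root term contributes $t\otimes s$ together with $\sum_k s_k\otimes B(v,\dots,\hat s_k,\dots,s_m,t)$; each branch block contributes $(t\rightarrow s_i)\otimes B(v,\dots,\hat s_i,\dots,s_m)$ together with $\sum_{k\neq i}s_k\otimes B(v,\dots,\hat s_k,\dots,t\rightarrow s_i,\dots,s_m)$. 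Summing over $i$, the first family of branch contributions is exactly $(t\otimes\mathbf 1)\rightarrow\delta(s)$, while the $s_k\otimes\cdots$ terms coming from the root term and from the branch blocks reassemble, after re-indexing the double sum over $k$ and $i$, into $\sum_k s_k\otimes\big(t\rightarrow B(v,\dots,\hat s_k,\dots,s_m)\big)=(\mathbf 1\otimes t)\rightarrow\delta(s)$, using that $t\rightarrow B(v,\dots,\hat s_k,\dots,s_m)$ is the sum of $B(v,\dots,\hat s_k,\dots,s_m,t)$ and of the trees obtained by grafting $t$ inside one of the remaining branches. This is a bounded, purely combinatorial check; the one place one can slip is this re-indexing of the detached-branch sums, so that is the step I would write out most carefully — it is the genuine content of the proposition.

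Once (ii) is in place, the rigidity theorem of \cite{L. Muriel} shows that $(T(V),\rightarrow,\delta)$ is simultaneously a free pre-Lie algebra and a cofree connected NAP coalgebra on its space of primitives, which by (iii) is $V$; this is the assertion. Two alternatives are worth recording. First, one can bypass the rigidity theorem: $(T(V),\cdot)$ with $t\cdot s:=B(v,s_1,\dots,s_m,t)$ (for $s=B(v,s_1,\dots,s_m)$) is the free NAP algebra generated by $V$, and $\delta$ is precisely its graded-dual coproduct, so cofreeness of $(T(V),\delta)$ among connected (conilpotent) NAP coalgebras is the categorical dual of freeness of $(T(V),\cdot)$. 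Second, one can verify the universal property directly: given a connected NAP coalgebra $(C,\delta_C)$ and a linear map $f\colon C\to V$, build the NAP-coalgebra morphism $C\to T(V)$ lifting $f$ by iterating $\delta_C$ — conilpotency makes the iteration terminate and \eqref{NAP coalgebra} makes the resulting decorated tree well defined — and prove its uniqueness by induction on the grading. In every route the essential obstacle is the same: controlling the combinatorics relating grafting $\rightarrow$ to the branch-detaching coproduct $\delta$, i.e. \eqref{equ}.
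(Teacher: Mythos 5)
The paper itself gives no proof of this statement: it is quoted from \cite{L. Muriel}, so the only meaningful comparison is with Livernet's original argument. Your computations are all correct: the verification of \eqref{NAP coalgebra} via the symmetry of $\sum_{k\neq l}t_k\otimes t_l\otimes B(v,\dots,\hat{t}_k,\dots,\hat{t}_l,\dots,t_n)$ under $\tau^{12}$, the connectedness by induction on the number of vertices, the identification of the primitives, and above all the distributive law \eqref{equ}, where your split of $t\rightarrow s$ into the root term $B(v,s_1,\dots,s_m,t)$ and the branch blocks $B(v,\dots,t\rightarrow s_i,\dots,s_m)$, followed by the re-indexing of the detached-branch double sum, is exactly the right bookkeeping.

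The objection is structural rather than computational: your primary route derives the proposition from the rigidity theorem of \cite{L. Muriel}, but Livernet's proof of that theorem itself invokes the cofreeness of $(T(W),\delta)$ --- the NAP-coalgebra morphism $A\to T(\mathrm{Prim}\,A)$ at the heart of the rigidity theorem is produced precisely by the couniversal property you are trying to establish. Relative to the cited source, your main argument is therefore circular. You already hold the repair: either of your two ``alternatives'' --- cofreeness as the graded dual of the freeness of the NAP algebra $(T(V),\cdot)$ with $t\cdot s=B(v,s_1,\dots,s_m,t)$, or the direct construction of the lift $C\to T(V)$ by iterating $\delta_C$ on a connected NAP coalgebra --- is a complete, non-circular proof, and the second is essentially Livernet's own; one of them should be promoted to the main argument. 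The distributive-law computation, while correct and worth recording, is then not needed for this proposition (it is an input to the rigidity theorem, not a consequence of cofreeness). A last small point: to get $\mathrm{Prim}(T(V),\delta)=V$ you must rule out cancellation in linear combinations of trees, not only show $\delta(t)\neq 0$ for a single tree with at least two vertices; this follows because the pair $\big(t_k,\,B(v,\dots,\hat{t}_k,\dots,t_n)\big)$ determines the original tree by regrafting, so the terms of $\delta$ applied to distinct isomorphism classes never collide.
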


	\subsection{Twisted NAP algebras of finite connected posets}
	Let $P$ be a finite connected poset, and $I$ be a subset of $P$. We denote by $I_-$ the set $\{x\notin I, \hbox{ there exists } y\in I \hbox{ such as } x\le_{P} y\}$.\\
	We note $I\circledcirc P$ whenever: 
	\begin{itemize}
	\item $I_-$ is a singleton included in $\hbox{min}(P)$,
    \item and $I$ is a connected component of the set $\{x\in P, I_-<_P x \}$.
\end{itemize}
	\begin{lemma}\label{lem}
		For any finite connected poset $P$, and any subsets $I, J$ of $P$, we have:
		 \begin{itemize}
    \item[1-] if $I\circledcirc P$ and $J\circledcirc P$, then $I=J \hbox{ or } I\cap J=\emptyset$.
    \item[2-] If $I\circledcirc P$, then $\hbox{min}(P\backslash I)=\hbox{min}(P)$.
   \item[3-] If $I\circledcirc P$, then $P\backslash I$ is a connected poset.
\end{itemize}
	\end{lemma}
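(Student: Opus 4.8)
The plan is to prove the three statements in the order given, since the later ones build on the understanding gained from the earlier ones. Throughout, I will use repeatedly the fact that $I_-$ is a singleton $\{a\}$ with $a \in \operatorname{min}(P)$, and that $I$ is one of the connected components of $U_a := \{x \in P \mid a <_P x\}$; I will write $I = C_a$ when I need to emphasize the choice of component.

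For part 1, suppose $I \circledcirc P$ and $J \circledcirc P$ with $I \cap J \neq \emptyset$. Write $I_- = \{a\}$, $J_- = \{b\}$. The key observation is that $I_-$ is recoverable from $I$: since $I$ is a connected component of $\{x \mid a <_P x\}$, every element of $I$ lies strictly above $a$, so $a \in I_-$; conversely I must check that $a$ is the \emph{only} element of $I_-$, which follows because if $x \notin I$ and $x \le_P y$ for some $y \in I$, then by antisymmetry either $x = y \in I$ (excluded) or $x <_P y$, and then $x$ would lie in the same component of $\{z \mid a <_P z\} \cup \{a\}$ issue — here I need to be slightly careful and argue that any such $x$ satisfies $x \le_P a$ or $a \le_P x$; since $a$ is minimal, $x \le_P a$ forces $x=a$, and $a <_P x$ would put $x$ in $I$. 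So I should show $x$ and $a$ are comparable: both lie below the same $y \in I$, and $\operatorname{min}(P) \cap (\text{down-set of } y)$ being a singleton is essentially what $I_- = \{a\}$ encodes. Once $I_- = \{a\}$ is pinned down by $I$ alone, a common element of $I$ and $J$ lies above both $a$ and $b$; by the singleton-minimum property this forces $a = b$, hence $I$ and $J$ are connected components of the \emph{same} set $U_a$, so $I = J$. I expect this uniqueness-of-$I_-$ step to be the main obstacle, as it is where the precise interplay between "singleton in $\operatorname{min}(P)$" and "connected component" must be used; the rest is bookkeeping.

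For part 2, let $I \circledcirc P$ with $I_- = \{a\}$. Since every element of $I$ is of the form $x$ with $a <_P x$, no element of $I$ is minimal in $P$, so $\operatorname{min}(P) \subseteq P \setminus I$ and hence $\operatorname{min}(P) \subseteq \operatorname{min}(P \setminus I)$ because deleting elements can only remove order relations, never create new minimal-ness obstructions above a retained minimal element. For the reverse inclusion, suppose $z \in \operatorname{min}(P \setminus I)$ but $z \notin \operatorname{min}(P)$; then there is $w <_P z$ with $w \in I$. But $w \in I$ and $w <_P z$ would, by the upper-ideal-like closure of a connected component of $U_a$ within $U_a$, force $z \in I$ (here I use that $z \in U_a$ since $a <_P w <_P z$, and that $z$ is connected to $w$ hence in the same component $I$), contradicting $z \in P \setminus I$. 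So $\operatorname{min}(P \setminus I) = \operatorname{min}(P)$.

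For part 3, I need $P \setminus I$ connected. The Hasse graph of $P$ is connected by hypothesis. Removing $I$ removes the vertices of the component $I$ of $U_a$ together with all edges incident to them. I claim every vertex $y \in P \setminus I$ is still connected to $a$ in the Hasse graph of $P \setminus I$: take a path from $y$ to $a$ in $G(P)$; if it never meets $I$ we are done, and if it does, let it first enter $I$ at some vertex $u$ via an edge from $v \notin I$; then $v$ is comparable to $u \in I$, and since $v \notin I$ while $u \in U_a$, the edge $v$–$u$ in the Hasse diagram forces (using part 2 and the component structure) that $v \in I_- = \{a\}$, so $v = a$; thus the path reaches $a$ before ever needing to traverse $I$, and truncating it gives a path in $P \setminus I$. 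Hence every vertex of $P \setminus I$ connects to $a$, so $P \setminus I$ is connected. The one delicate point is justifying "an edge from $v \notin I$ into $I$ forces $v = a$", which again rests on $I$ being a full connected component of $U_a$ and $a$ being its unique predecessor; this is really the same phenomenon as in part 1, so once that is cleanly established the remaining arguments are short.
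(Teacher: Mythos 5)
Your proposal is correct, but it follows a genuinely different route from the paper's in parts 1 and 3, so a comparison is in order. First, the step you single out as ``the main obstacle'' in part 1 --- that $I_-$ equals the singleton $\{a\}$ --- is not an obstacle: the relation $I\circledcirc P$ is \emph{defined} by requiring $I_-=\{x\notin I:\ \exists y\in I,\ x\le_P y\}$ to be a singleton contained in $\min(P)$ and $I$ to be a connected component of the up-set of that singleton, so $a$ is the unique element of $I_-$ by hypothesis and your digression there can be deleted. Granting that, your core argument for part 1 (a common point $c\in I\cap J$ satisfies $b<_P c$; $b$ is minimal hence not in $I$; hence $b\in I_-=\{a\}$, so $a=b$ and $I,J$ are components of the \emph{same} up-set, hence equal) is tighter than the paper's, which splits on $I_-=J_-$ versus $I_-\ne J_-$ and in the latter case asserts that $I$ and $J$ are components of $E_1\sqcup E_2$ --- a shaky step, since $E_1$ and $E_2$ need not be disjoint and a component of $E_1$ need not be a component of $E_1\cup E_2$; your reduction to the equal-up-set case avoids this entirely. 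In part 2 you also supply the reverse inclusion $\min(P\setminus I)\subseteq\min(P)$, which the paper passes over in silence, and your argument for it is right. In part 3 the paper argues by contradiction on a shortest comparability chain between two points of $P\setminus I$, with a four-case analysis on the directions of the relations around a chain element lying in $I$; you instead connect every vertex of $P\setminus I$ to the distinguished element $a$ by truncating a path of $G(P)$ at its first entry into $I$. Both arguments rest on the same two facts (a point of $U_a$ comparable to a point of $I$ lies in $I$; a point outside $I$ lying below a point of $I$ is the single element $a$), and both are valid; yours trades the case bookkeeping for a cleaner global statement. The one step you should write out rather than gesture at is the dichotomy at the entering edge $v$--$u$: if $u<_P v$ then $a<_P u<_P v$ puts $v$ in $U_a$ and comparable to $u\in I$, forcing $v\in I$, a contradiction; hence $v<_P u$ and $v\in I_-=\{a\}$.
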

	\begin{proof}
	1- Let $P$ be a finite connected poset and $I, J$ two subsets of $P$, such that $I, J\circledcirc P$. So, we have two possible cases: either $I_-\ne J_-$ or $I_- = J_-$.\\
	- If $I_-\ne J_-$, with $I_-=\{ y \}$, and $J_-=\{ z \}$, then $I$ is a connected component of the set $E_1=\{x\in P, y<_Px \}$ and $J$ is a connected component of the set $E_2=\{x\in P, z<_Px \}$. So $I$ and $J$ is a connected component of the set $E_1 \sqcup E_2$, then $I=J$ or $I\cap J=\emptyset$. Gold $I_-\ne J_-$, then $I\ne J$. Hence $I\cap J=\emptyset$.\\
	- If $I_- = J_-= \{y\}$, then $I$ and $J$ is two connected components of the set $\{x\in P, y<_Px \}$, then: $I=J$ or $I\cap J=\emptyset$.\\
	
	2- If there exists $x\in \hbox{ min }(P)\cap I$, then $I_-<_P x$, which is absurd. Thus, $\hbox{ min}(P)\cap I=\emptyset$. Then, $\hbox{min}(P\backslash I)=\hbox{min}(P)$.\\
	
	3- If $I\circledcirc P$, then $I$ is a connected component of the set $E=\{x\in P, I_-<_P x \}$.  
	We have $P\backslash I$ is a connected poset if and only if, for all $x, y \in P\backslash I$ there exist
$a_1,..., a_n \in P\backslash I$ such that $x\mathcal{R}a_1 ...\mathcal{R}a_n\mathcal{R}y$, where $\mathcal{R}$ is defined by: $t_1\mathcal{R}t_2 \iff (t_1<_P t_2 \hbox{ or } t_2<_P t_1$). We assume that $x, a_1,..., a_n, y$ is the smallest chain that connects $x$ to $y$.\\
 If $P\backslash I$ is not connected, there is $k\in [n]$ such that $a_k \in I$. By choosing $k$ appropriately we can also assume that $a_{k-1}$ or $a_{k+1}$ are not in $I$, then we have four possible cases:
\begin{itemize}
    \item First case; $a_{k-1}\le_P a_{k}\le_P a_{k+1}$.
Then this chain is not the smallest chain that connects $x$ to $y$, which is absurd.
    \item Second case; $a_{k-1}\ge_P a_{k}\ge_P a_{k+1}$, the proof is similar.
    \item Third case; $a_{k-1}\ge_P a_{k}\le_P a_{k+1}$.
    Since $a_k \in I$, then $I_- <_P a_k$. Moreover $<_P$ is transitive, then
$I_- <_P a_{k-1}$, so $a_{k-1} \in E$. Then we obtained: $(a_{k-1}, a_k)\in E\times I$, $I$ is a connected component of $E$
and $a_k <_P a_{k-1}$. Hence $a_{k-1} \in I$, and similarly $a_{k+1}\in I$.
    Then $a_{k-1}\in I$ and $a_{k+1} \in I$, which is in contradiction with the assumptions.
    \item Fourth case; $a_{k-1}\le_P a_{k}\ge_P a_{k+1}$.  In this case we have $a_{k-1} \in I_-$ and $a_{k+1}\in I_-$, so $a_{k-1}=a_{k+1}$. Then this chain is not the smallest chain that connects $x$ to $y$, which is absurd.
\end{itemize}
Hence, $P\backslash I$ is a connected poset.

	\end{proof}
	\begin{definition}
	The bilinear product $\circleright$ is defined in the species of finite connected posets $\mathbb{U}$ as follows: for all $P\in \mathbb{U}_{X_1}$ and $Q\in \mathbb{U}_{X_2}$, where $X_1$ and $X_2$ are two finite sets:
	\begin{align*}
	    P\circleright Q=\sum_{v\in \tiny{\hbox{min}}(Q)}P\searrow_v Q.
	\end{align*}
	\end{definition}
	\begin{proposition}
	  The  bilinear product $ \circleright $ endows the species $\mathbb{U}$ with a twisted NAP algebra structure i.e., the following identity is verified: 
	  $$P\circleright(Q\circleright R)=Q\circleright(P\circleright R)$$ 
	  for any three finite connected posets $P, Q$ and $R$.
	\end{proposition}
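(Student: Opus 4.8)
The plan is to expand each of $P\circleright(Q\circleright R)$ and $Q\circleright(P\circleright R)$ into a sum of single graftings $\searrow_v$ indexed by ordered pairs of minimal elements of $R$, and then to match the two sums term by term. The one structural input needed is the elementary observation that grafting does not change the set of minimal elements of the lower poset: if $S$ is a finite connected poset, $w\in\min(S)$, and $Q$ is any finite connected poset, then $\min(Q\searrow_w S)=\min(S)$. Indeed the order on the $S$-part of $Q\searrow_w S$ is untouched, so a vertex of $S$ is minimal in $Q\searrow_w S$ exactly when it is minimal in $S$; and since every vertex of $Q$ lies at or above some minimal vertex of $Q$, and $w$ is placed strictly below each such minimal vertex, every vertex of $Q$ lies strictly above $w$ in $Q\searrow_w S$ and hence is not minimal there. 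In particular each $v\in\min(R)$ remains a vertex of $Q\searrow_w R$ at which one may graft.

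Using this together with bilinearity and the definitions of $\circleright$ and $\searrow_v$, I would compute
\begin{align*}
P\circleright(Q\circleright R)
&=\sum_{w\in\min(R)}P\circleright(Q\searrow_w R)
=\sum_{w\in\min(R)}\ \sum_{v\in\min(Q\searrow_w R)}P\searrow_v(Q\searrow_w R)\\
&=\sum_{v,\,w\in\min(R)}P\searrow_v(Q\searrow_w R),
\end{align*}
and symmetrically $Q\circleright(P\circleright R)=\sum_{v,\,w\in\min(R)}Q\searrow_w(P\searrow_v R)$. Hence it suffices to prove, for every (not necessarily distinct) pair $v,w\in\min(R)$, the identity $P\searrow_v(Q\searrow_w R)=Q\searrow_w(P\searrow_v R)$.

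For this I would exhibit an explicit common value: fix $v,w\in\min(R)$ and let $\Phi$ be the poset on the disjoint union of the underlying sets of $P$, $Q$ and $R$ whose partial order is the transitive closure of the union of the orders of $P$, $Q$ and $R$ together with the relations $v<m$ for every $m\in\min(P)$ and $w<m'$ for every $m'\in\min(Q)$. Because $v$ and $w$ are minimal in $R$ (so nothing of $R$ sits below them), placing $\min(P)$ directly above $v$ and $\min(Q)$ directly above $w$ creates no comparabilities among the vertices already present; consequently performing the two graftings in either order produces exactly $\Phi$, and $\Phi$ is manifestly unchanged when one exchanges $(P,v)$ with $(Q,w)$. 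The one case deserving care is the diagonal $v=w$: here one invokes the structural observation to see that $v$ is still minimal in $Q\searrow_v R$, so that $P\searrow_v(Q\searrow_v R)$ is defined and again equals $\Phi$, now with both $\min(P)$ and $\min(Q)$ placed over the single vertex $v$. Connectedness is preserved at every step, and the whole argument is visibly natural in bijections of the underlying sets, so the displayed chain of equalities is precisely the twisted NAP identity in the category of species.

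I expect the only genuine difficulty to be bookkeeping: checking that the inner summation in $P\circleright(Q\searrow_w R)$ is genuinely indexed by $\min(R)$ (the content of the structural observation) and handling the diagonal pairs $v=w$ carefully enough that the comparison above is an honest bijection between the summands of the two sides, not merely an equality of the two totals.
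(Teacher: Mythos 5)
Your proof is correct and follows essentially the same route as the paper's: expand both sides into a double sum over pairs of minimal vertices of $R$ using the observation that $\min(Q\searrow_w R)=\min(R)$, and then observe the resulting expression is symmetric in $P$ and $Q$. The only difference is that you justify the term-by-term identity $P\searrow_v(Q\searrow_w R)=Q\searrow_w(P\searrow_v R)$ explicitly (via the common poset $\Phi$), whereas the paper simply asserts the symmetry of the double sum; your added detail is a welcome but not structurally different refinement.
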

	\begin{proof}
	Let $P, Q$ and $R$ be three finite connected posets, we have:
	\begin{align*}
	    P\circleright(Q\circleright R)&=\sum_{v\in \tiny{\hbox{min}}(R)}P\circleright(Q\searrow_v R)\\
	    &=\sum_{u\in \tiny{\hbox{min}}(Q\searrow_v R)}\sum_{v\in \tiny{\hbox{min}}(R)}P\searrow_u(Q\searrow_v R),
	\end{align*}
	and since $\hbox{min}(Q\searrow_v R)=\hbox{min}(R)$, for all $v\in \hbox{min}(R)$ then:
	\begin{align*}
	    P\circleright(Q\circleright R)&=\sum_{v,\, u\in \tiny{\hbox{min}}(R)}P\searrow_u(Q\searrow_v R),
	\end{align*}
	Which is symmetric on $P$ and $Q$. Correspondingly, we obtain:
	$$P\circleright(Q\circleright R)=Q\circleright(P\circleright R).$$
	\end{proof}
	\begin{corollary}\cite[Chapter 17]{acg10}
	Applying the functor $\overline{\mathcal{K}}$ gives that $\big(\overline{\mathcal{K}}(\mathbb{U}), \circleright \big)$ is a NAP algebra.
	\end{corollary}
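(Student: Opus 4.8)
The plan is to deduce this formally from the preceding Proposition together with the monoidal properties of the bosonic Fock functor. Recall from \cite[Chapter 15]{acg10} that $\overline{\mathcal{K}}$ is a lax symmetric monoidal functor from the category of linear species, with its Cauchy product $\otimes$ and symmetry $\tau$, to the category of graded vector spaces with the usual tensor product; in particular it carries monoids (twisted algebras) to graded algebras and, because it respects the symmetry, it sends any structure on a species defined by commutative diagrams built from $\otimes$, the associativity constraint and $\tau$ to the same kind of structure on its image. Since the left NAP identity \eqref{NAP algebra} is exactly such a diagram, the result follows once the bookkeeping is carried out.

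Concretely I would argue as follows. First, read the previous Proposition as the statement that the morphism of species $\circleright\colon\mathbb{U}\otimes\mathbb{U}\to\mathbb{U}$ makes the two composites $\mathbb{U}\otimes\mathbb{U}\otimes\mathbb{U}\to\mathbb{U}$ given by $\circleright\circ(\id\otimes\circleright)$ and by $\circleright\circ(\id\otimes\circleright)\circ(\tau\otimes\id)$ coincide. Second, apply $\overline{\mathcal{K}}$: the laxity natural transformation $\varphi\colon\overline{\mathcal{K}}(\mathbb{U})\otimes\overline{\mathcal{K}}(\mathbb{U})\to\overline{\mathcal{K}}(\mathbb{U}\otimes\mathbb{U})$ composed with $\overline{\mathcal{K}}(\circleright)$ is, by definition, the induced product on $\overline{\mathcal{K}}(\mathbb{U})$, and the coherence axioms of a lax symmetric monoidal functor say that $\overline{\mathcal{K}}$ sends $\tau$ to the flip of graded vector spaces and the associativity constraint of species to that of graded vector spaces. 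Third, apply the functor to the commuting species diagram and splice in these coherences; the outcome is precisely the commuting diagram expressing $a\circleright(b\circleright c)=b\circleright(a\circleright c)$ for all $a,b,c\in\overline{\mathcal{K}}(\mathbb{U})$. Since $\overline{\mathcal{K}}(\mathbb{U})=\bigoplus_{n\geq 0}\mathbb{U}_n/S_n$ is the vector space spanned by isomorphism classes of finite connected posets, this is the claim.

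The one point that really needs care --- and the main obstacle --- is the identification of the induced product: one must verify that, after passing to $S_n$-coinvariants, $\overline{\mathcal{K}}(\circleright)\circ\varphi$ is the naive operation $[P]\circleright[Q]=\sum_{v\in\hbox{min}(Q)}[P\searrow_v Q]$, in particular that its right-hand side does not depend on the chosen representatives $P$ and $Q$. This is exactly the description of $\overline{\mathcal{K}}$ on a Cauchy product of species recorded in \cite[Chapter 15]{acg10}; once it is in hand, no further computation is needed, the NAP identity being inherited formally. Alternatively, one can bypass the abstract machinery and check the identity directly on isomorphism classes, repeating the computation from the proof of the previous Proposition after observing that $\hbox{min}(Q\searrow_v R)=\hbox{min}(R)$ descends to isomorphism classes --- but the monoidal-functor route is the one indicated by the citation and spares us redoing the combinatorics.
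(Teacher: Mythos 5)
Your argument is correct and is precisely the transport-of-structure reasoning that the paper's citation to \cite[Chapter 17]{acg10} stands in for: the paper offers no proof beyond that citation, relying on $\overline{\mathcal{K}}$ being a symmetric lax monoidal functor so that the NAP identity, being a diagram built from $\otimes$ and $\tau$, descends to coinvariants. The one point you rightly isolate --- identifying $\overline{\mathcal{K}}(\circleright)\circ\varphi$ with the naive operation $[P]\circleright[Q]=\sum_{v\in\min(Q)}[P\searrow_v Q]$ on isomorphism classes --- is indeed the only content beyond the formal machinery, and it holds because $\circleright$ is a morphism of species and hence equivariant under relabelling.
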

	\begin{definition}\label{def-connected}\cite{L. Muriel}
Let $(V, \delta)$ be a coalgebra, i.e. be a vector space $V$ together with linear map $\delta:V\longrightarrow V\otimes V$. The following defines a filtration on $V$:
\begin{itemize}
    \item $\hbox{ Prim}(V)=V_1=\{ x\in V, \delta(x)=0 \}$,
    \item $V_n=\{ x\in V, \delta(x)\in \bigoplus \limits_{0<k<n}V_k\otimes V_{n-k} \}$.
\end{itemize}
The coalgebra $(V, \delta)$ is said to be connected if $V=\bigcup \limits_{k>0} V_k$.
\end{definition}
	\begin{proposition}\label{prop}
	The coproduct $\delta:\mathbb{U}\longrightarrow \mathbb{U}\otimes \mathbb{U}$
	defined for any finite set $X$ by
	\begin{eqnarray*}
		\delta:\mathbb{U}_X&\longrightarrow& (\mathbb{U}\otimes \mathbb{U})_X=\bigoplus_{Y\sqcup Z= X}\mathbb{U}_{Y}\otimes\mathbb{U}_{Z}\\
		P&\longmapsto& \frac{1}{|min(P)|}\sum_{I\circledcirc P} I \otimes P\backslash I,
	\end{eqnarray*}
	 is a twisted connected NAP coproduct, i.e. the following identity is verified 
	$$(Id \otimes \delta)\delta=\tau^{12}(Id \otimes \delta)\delta,$$
	$\hbox{ where } \tau^{12}=\tau \otimes Id, \tau \hbox{ is the flip }.$
	\end{proposition}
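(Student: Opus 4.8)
The plan is to verify the NAP coassociativity identity $(\Id\otimes\delta)\delta=\tau^{12}(\Id\otimes\delta)\delta$ directly by computing both sides on a finite connected poset $P$ and showing they produce the same formal sum over triples of subsets, differing only by a flip of the first two tensor factors. First I would expand $\delta(P)=\frac{1}{|\mathrm{min}(P)|}\sum_{I\circledcirc P}I\otimes (P\backslash I)$, and then apply $\Id\otimes\delta$. By Lemma \ref{lem}, part 3, $P\backslash I$ is a connected poset so $\delta$ is defined on it, and by part 2, $\mathrm{min}(P\backslash I)=\mathrm{min}(P)$, so the normalization factor $\frac{1}{|\mathrm{min}(P\backslash I)|}=\frac{1}{|\mathrm{min}(P)|}$ is the same. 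Thus
\begin{equation*}
(\Id\otimes\delta)\delta(P)=\frac{1}{|\mathrm{min}(P)|^2}\sum_{I\circledcirc P}\ \sum_{J\circledcirc (P\backslash I)} I\otimes J\otimes \big((P\backslash I)\backslash J\big).
\end{equation*}

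The key combinatorial claim I would isolate and prove as the heart of the argument is this: the set of pairs $(I,J)$ with $I\circledcirc P$ and $J\circledcirc(P\backslash I)$ is in canonical bijection — via $(I,J)\mapsto(J,I)$ — with the set of pairs $(I',J')$ with $I'\circledcirc P$ and $J'\circledcirc(P\backslash I')$, and moreover $(P\backslash I)\backslash J = (P\backslash I')\backslash J'$ as the same subset of $P$ when $(I',J')=(J,I)$. Equivalently: if $I\circledcirc P$ and $J\circledcirc(P\backslash I)$, then $J\circledcirc P$ and $I\circledcirc(P\backslash J)$, and the two "remainders" coincide. To see this, note that $J\circledcirc(P\backslash I)$ means $J_-$ (computed in $P\backslash I$) is a singleton in $\mathrm{min}(P\backslash I)=\mathrm{min}(P)$ and $J$ is a connected component of $\{x\in P\backslash I : J_-<_P x\}$. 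Since $I\circledcirc P$, $I$ is built from an element $I_-\in\mathrm{min}(P)$ that is \emph{distinct} from $J_-$ (as $J_-\in\mathrm{min}(P)$ lies in $P\backslash I$ while, as in the proof of Lemma \ref{lem} part 1, any minimal element used as $I_-$ is not available once... ), so the "downsets below $I$" and "downsets below $J$" are governed by different minimal elements; I would check carefully that removing $I$ does not change whether a vertex $x$ satisfies $J_-<_P x$ for $x\notin I$, which requires arguing (using transitivity and the connected-component structure, much as in the Fourth/Third case analysis of Lemma \ref{lem}) that $I$ contains no vertex $x$ with $J_-<_P x$ unless such a vertex forces $I_-=J_-$, contradicting distinctness. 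Granting this, $I$ and $J$ are "independent" deletions: $P\backslash I$ and $P\backslash J$ are both connected with the same minimal set, $(P\backslash I)\backslash J=(P\backslash J)\backslash I=P\backslash(I\sqcup J)$, and the symmetry of the double sum in $I\leftrightarrow J$ is immediate. Substituting back,
\begin{equation*}
(\Id\otimes\delta)\delta(P)=\frac{1}{|\mathrm{min}(P)|^2}\sum_{\substack{I,J\subset P\\ I\sqcup J\ \mathrm{valid}}} I\otimes J\otimes \big(P\backslash(I\sqcup J)\big),
\end{equation*}
which is manifestly invariant under $\tau^{12}$, giving the claimed identity.

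For connectedness of the coalgebra $(\mathbb{U},\delta)$ in the sense of Definition \ref{def-connected}, I would argue by induction on the number of vertices: each summand $I\otimes(P\backslash I)$ of $\delta(P)$ has both factors with strictly fewer vertices than $P$ (since $I_-$ is a nonempty singleton removed from $P\backslash I$, and $I$ itself omits $I_-$), and $I$, $P\backslash I$ are connected posets by construction and Lemma \ref{lem} part 3; so by induction every connected poset lies in some $V_n$, establishing $\mathbb{U}=\bigcup_{k>0}V_k$ and hence also that the quotient $\overline{\mathcal K}(\mathbb U)$ is a connected NAP coalgebra.

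The main obstacle I anticipate is precisely the bijection/independence claim in the second paragraph: one must rule out any interaction between the component $I$ (sitting above $I_-$) and the component $J$ (sitting above a different minimal element $J_-$) — in particular that no element of $I$ can be comparable-above $J_-$, and symmetrically. This is a local chain-pushing argument in the spirit of the case analysis already carried out in the proof of Lemma \ref{lem} part 3, but it must be done with care because $J$ is a priori only known to be a component in $P\backslash I$, not in $P$, so one has to promote that status back up to $P$. I would handle it by showing that the vertex set of $P$ partitions, relative to the two chosen distinct minimal elements, so that $I$ and $J$ live in genuinely disjoint "branches," making all the deletions commute; once that structural picture is in place the rest is bookkeeping.
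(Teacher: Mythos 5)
Your proposal takes essentially the same route as the paper's proof: expand $(\Id\otimes\delta)\delta(P)$, use Lemma \ref{lem} (2) and (3) to identify the normalization factor $\tfrac{1}{|\min(P\backslash I)|}=\tfrac{1}{|\min(P)|}$ and to stay inside $\mathbb{U}$, and then re-index the double sum as a sum over pairs $I,J\circledcirc P$ with $I\cap J=\emptyset$, which is manifestly $\tau^{12}$-invariant. The paper performs this re-indexing in a single unjustified line, whereas you correctly isolate it as the point that actually requires proof; your treatment of connectedness is also fine and more explicit than the paper's.

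However, the justification you sketch for that key re-indexing rests on a false assertion: you claim that if $I\circledcirc P$ and $J\circledcirc(P\backslash I)$ then $I_-$ and $J_-$ are necessarily distinct. This fails already for $P=\{a,b,c\}$ with $a<b$ and $a<c$: here $I=\{b\}\circledcirc P$, $J=\{c\}\circledcirc(P\backslash I)$, and $I_-=J_-=\{a\}$. Indeed $I_-$ is by definition \emph{not} contained in $I$, so it survives in $P\backslash I$ and nothing prevents $J$ from sitting above the same minimal element. The correct argument splits into two cases. If $I_-=J_-=\{y\}$, then $I$ is a connected component of $E_y=\{x\in P:\ y<_P x\}$, so $E_y\setminus I$ is a union of the remaining components of $E_y$ and $J$ is one of them; this is in fact the easy case. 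If $I_-\neq J_-$, one checks (by the chain-pushing you outline, using that $I$ is a full connected component of $E_{I_-}$) that $I\cap E_{J_-}=\emptyset$ and that no element of $I$ lies below an element of $J$, so deleting $I$ changes neither $E_{J_-}$ nor its components nor the set $J_-$; hence $J\circledcirc P$ and, symmetrically, $I\circledcirc(P\backslash J)$ with $(P\backslash I)\backslash J=P\backslash(I\sqcup J)$. With this two-case repair your bijection, and therefore the whole proof, goes through.
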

	\begin{proof}\label{proof}
	Let $P$ be a finite connected poset. We have
	\begin{align*}
	    (Id \otimes \delta)\delta(P)&=\frac{1}{|min(P)|}\sum_{I\circledcirc P} I \otimes \delta(P\backslash I)\\
	    &=\frac{1}{|min(P)|}\sum \limits_{\underset{J\circledcirc P\backslash I}{ I\circledcirc P }}\frac{1}{|min(P\backslash I)|} I \otimes J\otimes \big((P\backslash I)\backslash J\big)\\
	    &=\frac{1}{|min(P)|^2}\sum \limits_{\underset{I\bigcap J=\emptyset}{ I,\, J\circledcirc P }} I \otimes J\otimes \big(P\backslash (I\sqcup J)\big)\\
	    &=\tau^{12}(Id \otimes \delta)\delta(P).
	\end{align*}
	By the Definition 2.6 in  \cite{L. Muriel} and the definition of the coproduct $\delta$, it is clear that the coalgebra $(\mathbb{U}, \delta)$ is connected. Hence, $(\mathbb{U}, \delta)$ is a twisted connected NAP coalgebra. 
	\end{proof}
	\begin{example}
	$\delta(\fcolorbox{white}{white}{
 \scalebox{0.7}{
  \begin{picture}(8,17) (217,-263)
    \SetWidth{1.0}
    \SetColor{Black}
    \Vertex(210,-260){2}
    \Vertex(226,-260){2}
    \Vertex(218,-249){2}
    \Line(210,-261)(219,-248)
    \Line(226,-260)(218,-250)
  \end{picture}
}})=\hspace*{-0.09cm} 0$
\hspace*{4cm}$\delta(\fcolorbox{white}{white}{
\scalebox{0.7}{
  \begin{picture}(8,19) (218,-274)
    \SetWidth{1.0}
    \SetColor{Black}
    \Vertex(219,-270){2}
    \Vertex(210,-258){2}
    \Vertex(226,-258){2}
    \Line(218,-271)(226,-257)
    \Line(220,-270)(209,-258)
  \end{picture}
}})=2\fcolorbox{white}{white}{
\scalebox{0.7}{
  \begin{picture}(-6,17) (242,-291)
    \SetWidth{1.0}
    \SetColor{Black}
    \Vertex(235,-288){2}
    \SetWidth{1.0}
  \end{picture}
}} \otimes
\fcolorbox{white}{white}{
\scalebox{0.7}{
  \begin{picture}(-6,17) (242,-291)
    \SetWidth{1.0}
    \SetColor{Black}
    \Line(235,-278)(235,-288)
    \SetWidth{0.0}
    \Vertex(235,-288){2}
    \SetWidth{1.0}
    \Vertex(235,-277){2}
  \end{picture}
}} $\\
\hspace*{2.57cm}$\delta(\fcolorbox{white}{white}{
\scalebox{0.7}{
  \begin{picture}(26,39) (415,-324)
    \SetWidth{1.0}
    \SetColor{Black}
    \Vertex(433,-321){2}
    \Vertex(433,-309){2}
    \Vertex(418,-308){2}
    \Vertex(419,-321){2}
    \Line(419,-321)(418,-309)
    \Line(433,-322)(417,-308)
    \Line(433,-322)(433,-309)
  \end{picture}
}})=\frac{1}{2}\fcolorbox{white}{white}{
\scalebox{0.7}{
  \begin{picture}(-6,17) (242,-291)
    \SetWidth{1.0}
    \SetColor{Black}
    \Vertex(235,-288){2}
    \SetWidth{1.0}
  \end{picture}
}}\hspace*{-0.09cm}\otimes \hspace*{-0.05cm} \fcolorbox{white}{white}{
\scalebox{0.7}{
  \begin{picture}(22,17) (218,-263)
    \SetWidth{1.0}
    \SetColor{Black}
    \Vertex(210,-260){2}
    \Vertex(226,-260){2}
    \Vertex(218,-249){2}
    \Line(210,-261)(219,-248)
    \Line(226,-260)(218,-250)
  \end{picture}
}}$
\hspace*{2cm}$\delta(\fcolorbox{white}{white}{
\scalebox{0.7}{
  \begin{picture}(11,30) (216,-263)
    \SetWidth{1.0}
    \SetColor{Black}
    \Vertex(218,-259){2}
    \Vertex(210,-247){2}
    \Vertex(226,-247){2}
    \Vertex(218,-236){2}
    \Line(218,-260)(226,-246)
    \Line(219,-260)(209,-247)
    \Line(210,-248)(219,-235)
    \Line(226,-247)(218,-237)
  \end{picture}
}})=\hspace{-0.12cm} \fcolorbox{white}{white}{
\scalebox{0.7}{
  \begin{picture}(22,17) (210,-263)
    \SetWidth{1.0}
    \SetColor{Black}
    \Vertex(210,-260){2}
    \Vertex(226,-260){2}
    \Vertex(218,-249){2}
    \Line(210,-261)(219,-248)
    \Line(226,-260)(218,-250)
  \end{picture}
}}\hspace*{-0.15cm}\otimes \fcolorbox{white}{white}{
\scalebox{0.7}{
  \begin{picture}(-6,17) (242,-291)
    \SetWidth{1.0}
    \SetColor{Black}
    \Vertex(235,-288){2}
    \SetWidth{1.0}
  \end{picture}
}}$
	\end{example}
	\begin{corollary}\cite[Chapter 17]{acg10}
	Applying the functor $\overline{\mathcal{K}}$ gives that $\big(\overline{\mathcal{K}}(\mathbb{U}), \delta \big)$ is a NAP coalgebra.
	\end{corollary}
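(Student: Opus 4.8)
The plan is to obtain the statement as a formal consequence of the monoidal properties of the bosonic Fock functor $\overline{\mathcal{K}}$, in exactly the same way as the NAP-algebra corollary just above. Recall from \cite[Chapters 15 and 17]{acg10} that $\overline{\mathcal{K}}$ is a (strong) symmetric monoidal functor from the category of linear species, equipped with the Cauchy tensor product, to the category of graded vector spaces with its usual tensor product; in particular there is a natural isomorphism $\overline{\mathcal{K}}(\mathbb{E}\otimes\mathbb{F})\cong\overline{\mathcal{K}}(\mathbb{E})\otimes\overline{\mathcal{K}}(\mathbb{F})$, coherent with the associativity and symmetry constraints on both sides. Since a twisted NAP coalgebra is nothing but a coalgebra object of the symmetric monoidal category of species satisfying the co-identity $(Id\otimes\delta)\delta=\tau^{12}(Id\otimes\delta)\delta$ --- an equality of two morphisms of species --- applying $\overline{\mathcal{K}}$ necessarily produces the corresponding structure and identity for ordinary vector spaces.

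Concretely, first I would define the coproduct on $\overline{\mathcal{K}}(\mathbb{U})$, still denoted $\delta$, as the composite of $\overline{\mathcal{K}}(\delta)\colon\overline{\mathcal{K}}(\mathbb{U})\to\overline{\mathcal{K}}(\mathbb{U}\otimes\mathbb{U})$, with $\delta$ the species morphism of Proposition \ref{prop}, followed by the monoidal structure isomorphism $\overline{\mathcal{K}}(\mathbb{U}\otimes\mathbb{U})\cong\overline{\mathcal{K}}(\mathbb{U})\otimes\overline{\mathcal{K}}(\mathbb{U})$. Then I would apply $\overline{\mathcal{K}}$ to both sides of the twisted NAP identity of Proposition \ref{prop}, regarded as two parallel morphisms $\mathbb{U}\to\mathbb{U}^{\otimes 3}$ of species, and chase the resulting coherence diagram: functoriality sends $\overline{\mathcal{K}}$ of a composite to the composite of the images; the structure isomorphism identifies $\overline{\mathcal{K}}(Id_{\mathbb{U}}\otimes\delta)$ with $Id_{\overline{\mathcal{K}}(\mathbb{U})}\otimes\delta$; and the compatibility of $\overline{\mathcal{K}}$ with the symmetry identifies $\overline{\mathcal{K}}(\tau^{12})$ with the flip $\tau^{12}=\tau\otimes Id$ on $\overline{\mathcal{K}}(\mathbb{U})^{\otimes 3}$. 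This yields $(Id\otimes\delta)\delta=\tau^{12}(Id\otimes\delta)\delta$ on $\overline{\mathcal{K}}(\mathbb{U})$, i.e. $(\overline{\mathcal{K}}(\mathbb{U}),\delta)$ is a NAP coalgebra. The same one-line argument moreover shows that connectedness is inherited, since the filtration of Definition \ref{def-connected} is built purely from $\delta$ and $\otimes$ and is therefore preserved by the monoidal functor.

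The only genuine work --- which is why the statement is attributed to \cite[Chapter 17]{acg10} rather than proved in detail here --- lies in the cited input: that $\overline{\mathcal{K}}$ really is strong symmetric monoidal, i.e. that forming $S_n$-coinvariants is compatible with the Cauchy product and with the symmetry up to coherent natural isomorphism. For our particular $\delta$ one may also note that the normalisation factors $\tfrac{1}{|\mathrm{min}(P)|}$ descend to the coinvariants without obstruction, since $|\mathrm{min}(P)|$ is invariant under relabelling; but this is automatic once the structure isomorphisms are in place. I do not anticipate any obstacle beyond carefully invoking these facts.
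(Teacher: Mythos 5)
Your argument is correct and is essentially what the paper's bare citation of \cite[Chapter 17]{acg10} stands in for: the corollary is stated there without proof, the entire content being that $\overline{\mathcal{K}}$ transports the twisted NAP coidentity of Proposition \ref{prop} along its symmetric (co)monoidal structure, exactly the coherence chase you describe. The only nuance worth flagging is that Aguiar--Mahajan present $\overline{\mathcal{K}}$ as bilax rather than strong monoidal; for transporting a coproduct one only needs the colax symmetric structure (the lax structure map of $\overline{\mathcal{K}}$ is indeed invertible, since coinvariants of an induced module reduce to coinvariants over the subgroup), so your proof goes through unchanged.
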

	\subsection{Free twisted pre-Lie algebras of finite posets}
	In this subsection, we prove the compatibility relation \cite[Theorem]{L. Muriel} between the pre-Lie product $\searrow$ and the NAP coproduct $\delta$.
	\begin{theorem}\label{Theorem}
	Let $P, Q$ be two finite connected posets. We have the following identity
$$\delta(P\searrow Q)=P\otimes Q+(P\otimes \mathbf 1+\mathbf 1\otimes P)\searrow \delta(Q),$$
 where the unit $\mathbf 1$ is identified to the empty poset.
	\end{theorem}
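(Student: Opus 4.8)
The plan is to unwind the definition of $\delta$ on $P\searrow Q = \sum_{v\in Q} P\searrow_v Q$ and classify the subsets $I$ with $I\circledcirc(P\searrow_v Q)$ according to where the grafting vertex $v$ sits. First I would record the key structural facts about $P\searrow_v Q$: its Hasse graph is that of $P$ together with that of $Q$ plus one new edge from $v$ down to each minimal vertex of $P$; consequently $\min(P\searrow_v Q)=\min(Q)$ whenever $v\in\min(Q)$, while for $v\notin\min(Q)$ one gets $\min(P\searrow_v Q)=\min(P)\sqcup\min(Q)$. Since the statement only pairs $P$ with $Q$ via the pre-Lie product $\searrow$, and $\searrow$ on the left is $\sum_{v}$, I will in fact split the sum over $v\in Q$ into $v\in\min(Q)$ and $v\notin\min(Q)$, and track how $|\min(\cdot)|$ in the normalization factor $1/|\min|$ behaves in each case.

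The main case analysis is on a subset $I$ with $I_-$ a singleton $\{a\}\subseteq\min(P\searrow_v Q)$ and $I$ a connected component of $\{x : a<x\}$. There are three behaviors. (1) If $a\in\min(P)$ (so necessarily $v\notin\min(Q)$, i.e. this contributes to the terms outside the pre-Lie product — I expect these to either cancel or to be absent after restricting to $\searrow$; careful bookkeeping here is needed): then $a$ lies below the whole copy of $P$ grafted appropriately, and the component $I$ above $a$ is essentially a piece of $P$ together with the $v$-attachment — this is the term that must reproduce $(\mathbf 1\otimes P)\searrow\delta(Q)$, where the $\mathbf 1$ records that the ``$P$-part'' is split off on the left and the grafting happens inside $\delta(Q)$. (2) If $a\in\min(Q)$ and the new edge from $v$ to $\min(P)$ is \emph{not} inside the component $I$ (equivalently $v\notin I$): then $I$ is exactly a set $I'$ with $I'\circledcirc Q$ not meeting $v$, and $P\searrow_v Q\setminus I = P\searrow_v(Q\setminus I')$ using Lemma~\ref{lem}(2) that $\min(Q\setminus I')=\min(Q)$ so $v$ still makes sense — these give $\mathbf 1\otimes P$ acting trivially, i.e. the $(\mathbf 1\otimes P)\searrow$ piece again, combined with $I'\otimes(Q\setminus I')$ pieces of $\delta(Q)$. (3) If $a\in\min(Q)$ and $v\in I$: then the entire copy of $P$ gets pulled into $I$ (since $P$ hangs below $v$ via the new edge, and $v>a$), so $I = I'\cup(\text{copy of }P\text{ grafted at }v)$ where $I'\circledcirc Q$ contains $v$; this is the term that builds $(P\otimes\mathbf 1)\searrow\delta(Q)$. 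Finally there is the extremal sub-case of (3) where $I' = \{v\} $ is itself all of $Q$ above $a$ with $I'=Q\setminus\{a\}$ — no, rather where $a$ is chosen so that $I$ is everything, giving the bare leading term $P\otimes Q$: this happens precisely once, namely when $|\min(Q)|=1$ contributes the ``$\delta(Q)$ absorbs nothing'' term, and I will check the normalization $1/|\min(P\searrow Q)|$ against $1/|\min(Q)|$ exactly cancels to produce the coefficient $1$ on $P\otimes Q$.

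After the classification, the proof is assembling: collect case (3)-generic into $(P\otimes\mathbf 1)\searrow\delta(Q)$, cases (1)+(2) into $(\mathbf 1\otimes P)\searrow\delta(Q)$, and the extremal term into $P\otimes Q$, checking that every normalization constant $1/|\min(\cdot)|$ matches — here I would use $\min(P\searrow_v Q)=\min(Q)$ for $v\in\min(Q)$ together with Lemma~\ref{lem}(2), and note that the ``bad'' terms with $v\notin\min(Q)$ must be shown to cancel in pairs or not to arise, which is where I expect to be most careful. The hard part will be \textbf{case (3)}: verifying that when $v\in I$ the whole of $P$ (and nothing more of $Q$) is forced into the connected component $I$, and simultaneously that $(P\searrow_v Q)\setminus I = Q\setminus I'$ (the copy of $P$ leaves with $I$), so that the right tensor factor is exactly the corresponding term of $\delta(Q)$ — this requires the connectivity argument of Lemma~\ref{lem}(3) applied to the grafted poset rather than to $Q$ itself. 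A secondary subtlety is the empty-poset convention: when $\delta(Q)$ has a primitive-type term the formula $(P\otimes\mathbf 1+\mathbf 1\otimes P)\searrow\delta(Q)$ must be read with $\mathbf 1\searrow(-)$ and $(-)\searrow\mathbf 1$ interpreted via the identification of $\mathbf 1$ with the empty poset, which I would spell out at the start so the two sides are literally comparable.
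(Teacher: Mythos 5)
Your overall strategy (expand $\delta(P\searrow Q)=\sum_v\delta(P\searrow_vQ)$, split according to whether $v\in\min(Q)$, and classify the admissible subsets $I\circledcirc(P\searrow_vQ)$) is the same as the paper's, but your plan rests on a wrong structural fact that would derail the bookkeeping. In $P\searrow_vQ$ the new edges go \emph{from} $v$ \emph{to} the minimal vertices of $P$, which in the paper's Hasse-diagram convention means $v<m$ for every $m\in\min(P)$: the copy of $P$ sits \emph{above} $v$, not below it. Consequently $\min(P\searrow_vQ)=\min(Q)$ for \emph{every} $v$, and your claim that $\min(P\searrow_vQ)=\min(P)\sqcup\min(Q)$ when $v\notin\min(Q)$ is false. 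This matters twice. First, the normalization $1/|\min(P\searrow_vQ)|$ must equal $1/|\min(Q)|$ uniformly in $v$ for the sum to reassemble into $(P\otimes\mathbf 1+\mathbf 1\otimes P)\searrow\delta(Q)$, whose terms all carry the factor $1/|\min(Q)|$ coming from $\delta(Q)$; with your count the $v\notin\min(Q)$ contributions would come out with the wrong coefficient. Second, your case (1) (components $I$ with $I_-\subseteq\min(P)$) simply does not occur, because no vertex of $P$ is minimal in $P\searrow_vQ$; you leave the fate of these terms open (``cancel or be absent''), but nothing in your plan would make them cancel if they existed, so the hedge conceals a real gap.

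For the record, the correct classification is: for $v\in\min(Q)$ the admissible $I$ are the whole grafted copy of $P$ (with $I_-=\{v\}$, contributing $P\otimes Q$ once for each of the $|\min(Q)|$ choices of $v$, hence coefficient $1$ after normalization --- not ``precisely once, when $|\min(Q)|=1$'' as you state) together with the $I\circledcirc Q$; for $v\notin\min(Q)$ they are $P\searrow_vJ$ for $J\circledcirc Q$ with $v\in J$ (these assemble into $(P\otimes\mathbf 1)\searrow\delta(Q)$) and $J\circledcirc Q$ with $v\notin J$ (which, pooled with the $I\circledcirc Q$ arising from $v\in\min(Q)$, assemble into $(\mathbf 1\otimes P)\searrow\delta(Q)$, since $v$ then ranges over all of $Q\setminus J$). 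Your case (3) intuition that the whole copy of $P$ is dragged into $I$ when $v\in I$ is correct, but for the opposite reason --- $P$ lies above $v$, hence above $I_-$ --- not because ``$P$ hangs below $v$.'' As written, the proposal would not yield a correct proof without first fixing the orientation of the graft and redoing the computation of $\min(P\searrow_vQ)$ and of all the coefficients.
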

	\begin{proof}\label{prv}
Let $P=(X_1, \leq_P)$, $Q=(X_2, \leq_Q)$ be two finite connected posets, we have:
	\begin{align*}
	    \delta(P\searrow Q)&=\sum_{v\in X_2}\delta(P\searrow_v Q)\\
	    &=\sum \limits_{\underset{I\circledcirc (P\searrow_v Q)}{v\in X_2 }}\frac{1}{|min(P\searrow_v Q)|} I \otimes \big((P\searrow_v Q)\backslash I\big)\\
	    &=\sum \limits_{\underset{I\circledcirc (P\searrow_v Q)}{v\in X_2 }} \frac{1}{|min(Q)|}I \otimes \big((P\searrow_v Q)\backslash I\big)\\
	    &=\frac{1}{|min(Q)|}\left( \sum \limits_{\underset{I\circledcirc (P\searrow_v Q)}{v\in X_2,\, v\in \hbox{\tiny{min}}(Q) }} I \otimes \big((P\searrow_v Q)\backslash I\big)+\sum \limits_{\underset{I\circledcirc (P\searrow_v Q)}{v\in X_2,\, v\notin \hbox{\tiny{min}}(Q) }} I \otimes \big((P\searrow_v Q)\backslash I\big)\right),
	\end{align*}
	we notice that\\
	- if $v\in \hbox{min}(Q)$, then
	$\big \{I, I\circledcirc (P\searrow_v Q)  \}=\{ P \}\cup \{ I,   I\circledcirc Q \big \},$\\
	and\\
	- if $v\notin \hbox{min}(Q)$, then $\{I, I\circledcirc (P\searrow_v Q)  \}=\{ P\searrow_v J, J\circledcirc Q, v\in J  \}\cup \{ J,   J\circledcirc Q, v\notin J \}.$\\
	Then 
	\begin{align*}
	    \delta(P\searrow Q)&=\frac{1}{|min(Q)|}\left( \sum \limits_{\underset{}{v\in X_2,\, v\in \hbox{\tiny{min}}(Q) }} P \otimes Q+ \sum \limits_{\underset{I\circledcirc Q}{v\in X_2,\, v\in \hbox{\tiny{min}}(Q) }} I \otimes \big(P\searrow_v (Q\backslash I)\big)\right)\\
	    &\hspace{1cm}+\frac{1}{|min(Q)|}\left(\sum \limits_{\underset{J\circledcirc Q,\, v\in J}{v\in X_2,\, v\notin \hbox{\tiny{min}}(Q) }} (P\searrow_v J) \otimes  Q\backslash J +\sum \limits_{\underset{J\circledcirc Q,\, v\notin J }{v\in X_2,\, v\notin \hbox{\tiny{min}}(Q)}} J \otimes \big(P\searrow_v (Q\backslash J)\big) \right),
	\end{align*}
	we notice that
	$$\sum \limits_{\underset{J\circledcirc Q,\, v\in J}{v\in X_2,\, v\notin \hbox{\tiny{min}}(Q) }} (P\searrow_v J) \otimes  Q\backslash J=\sum \limits_{\underset{J\circledcirc Q}{}} (P\otimes \mathbf 1)\searrow (J \otimes Q\backslash J),$$
	and
	$$\sum \limits_{\underset{J\circledcirc Q,\, v\notin J }{v\in X_2,\, v\notin \hbox{\tiny{min}}(Q)}} J \otimes \big(P\searrow_v (Q\backslash J)\big)=\sum \limits_{\underset{J\circledcirc Q }{v\in X_2,\, v\notin \hbox{\tiny{min}}(Q)}} J \otimes \big(P\searrow_v (Q\backslash J)\big),$$
	then 
	$$ \sum \limits_{\underset{I\circledcirc Q}{v\in X_2,\, v\in \hbox{\tiny{min}}(Q) }} I \otimes \big(P\searrow_v (Q\backslash I)\big)+ \sum \limits_{\underset{J\circledcirc Q,\, v\notin J }{v\in X_2,\, v\notin \hbox{\tiny{min}}(Q)}} J \otimes \big(P\searrow_v (Q\backslash J)\big)=\sum \limits_{\underset{J\circledcirc Q}{}} (\mathbf 1\otimes P)\searrow (J \otimes Q\backslash J),$$
	accordingly
	\begin{align*}
	    \delta(P\searrow Q)&=\frac{1}{|min(Q)|}\left( \sum \limits_{\underset{}{v\in X_2,\, v\in \hbox{\tiny{min}}(Q) }} P \otimes Q + \sum \limits_{\underset{J\circledcirc Q}{}} (P\otimes \mathbf 1)\searrow (J \otimes Q\backslash J) + \sum \limits_{\underset{J\circledcirc Q}{}} (\mathbf 1\otimes P)\searrow (J \otimes Q\backslash J)\right).
	\end{align*}
	Hence,
	$$\delta(P\searrow Q)=P \otimes Q + (P\otimes \mathbf 1)\searrow \delta(Q) +(\mathbf 1\otimes P)\searrow \delta(Q).$$
	\end{proof}
	\begin{corollary}
		Applying the functor $\overline{\mathcal{K}}$ gives that, $\big(\overline{\mathcal{K}}(\mathbb{U}), \searrow \big)$
		 endowed with the coproduct $\delta$ is a free pre-Lie algebra and a cofree NAP coalgebra.
	\end{corollary}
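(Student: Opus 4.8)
The plan is to obtain this corollary as a direct application of M. Livernet's rigidity theorem \cite{L. Muriel}, after transporting every structure built on the species $\mathbb{U}$ to the graded vector space $\overline{\mathcal{K}}(\mathbb{U})$ through the bosonic Fock functor. The key external input is that $\overline{\mathcal{K}}$ is a strong symmetric monoidal functor from linear species to graded vector spaces \cite[Chapter 15]{acg10}; consequently it carries a twisted pre-Lie algebra to an ordinary pre-Lie algebra, a twisted NAP coalgebra to an ordinary NAP coalgebra, and a compatibility relation formulated in the monoidal category of species to the corresponding relation in graded vector spaces. All the mathematical content has in fact already been produced in the preceding proposition and theorem; what remains is to assemble it.

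Concretely, I would check the three hypotheses of the rigidity theorem in turn. First, $(\mathbb{U},\searrow)$ is a twisted pre-Lie algebra: the product $\searrow$ restricts from the twisted pre-Lie structure on the species $\mathbb{V}$ of finite connected topological spaces to its sub-species of $T_0$ spaces, so that $(\overline{\mathcal{K}}(\mathbb{U}),\searrow)$ is a pre-Lie algebra. Second, by Proposition \ref{prop} the coproduct $\delta$ is a twisted connected NAP coproduct on $\mathbb{U}$, hence $\delta$ descends to a NAP coproduct on $\overline{\mathcal{K}}(\mathbb{U})$; connectedness in the sense of Definition \ref{def-connected} survives the passage to $\overline{\mathcal{K}}$ because the filtration $V_1\subset V_2\subset\cdots$ attached to $(\mathbb{U},\delta)$ is refined by the cardinality grading, so that its image still exhausts $\overline{\mathcal{K}}(\mathbb{U})$. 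Third, Theorem \ref{Theorem} is precisely the twisted form of the distributive law \eqref{equ}: applying $\overline{\mathcal{K}}$ to the identity $\delta(P\searrow Q)=P\otimes Q+(P\otimes\mathbf 1+\mathbf 1\otimes P)\searrow\delta(Q)$ yields $\delta(x\searrow y)=x\otimes y+(x\otimes\mathbf 1+\mathbf 1\otimes x)\searrow\delta(y)$ in $\overline{\mathcal{K}}(\mathbb{U})$, where $\mathbf 1$ is the class of the empty poset.

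Once these three points are in place, Livernet's rigidity theorem applies verbatim and gives that $\big(\overline{\mathcal{K}}(\mathbb{U}),\searrow,\delta\big)$ is a free pre-Lie algebra, freely generated by $\hbox{Prim}\big(\overline{\mathcal{K}}(\mathbb{U})\big)$, and simultaneously a cofree NAP coalgebra. The only step that is more than a formality is the verification that $\overline{\mathcal{K}}$ interacts with tensor products exactly as required for the twisted identities above to become their classical analogues, and that connectedness is preserved; granting the monoidality and grading-compatibility recorded in \cite{acg10}, however, the argument is purely routine, and I expect no genuine obstacle beyond this bookkeeping.
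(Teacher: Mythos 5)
Your proposal is correct and follows exactly the paper's route: the paper's own proof is the one-line observation that the corollary is a direct consequence of Theorem \ref{Theorem} (the distributive law) together with Proposition \ref{prop} and M.~Livernet's rigidity theorem, after applying the bosonic Fock functor $\overline{\mathcal{K}}$. Your version merely spells out the bookkeeping (monoidality of $\overline{\mathcal{K}}$ and preservation of connectedness) that the paper leaves implicit.
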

	\begin{proof}
	This is a direct consequence of Theorem \ref{Theorem} and M. Livernet's rigidity theorem \cite[Theorem]{L. Muriel}.
	\end{proof}
		\begin{corollary}
	 $\overline{\mathcal{K}}(\mathbb{U})$ is generated as a pre-Lie algebra by $\hbox{ Prim}\big(\overline{\mathcal{K}}(\mathbb{U})\big)$.
	\end{corollary}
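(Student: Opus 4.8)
The plan is to deduce the statement directly from the preceding corollary together with the precise form of M.\ Livernet's rigidity theorem recalled in the introduction. That corollary, already established above, asserts that $\big(\overline{\mathcal{K}}(\mathbb{U}),\searrow\big)$ equipped with $\delta$ is a free pre-Lie algebra and a cofree NAP coalgebra; it was obtained by applying the bosonic Fock functor $\overline{\mathcal{K}}$ to the twisted compatibility of Theorem \ref{Theorem}, which becomes the distributive law \eqref{equ} in $\overline{\mathcal{K}}(\mathbb{U})$, while Proposition \ref{prop} and its corollary supply the connected NAP coproduct structure, so that all hypotheses of the rigidity theorem are met for the triple $\big(\overline{\mathcal{K}}(\mathbb{U}),\searrow,\delta\big)$.

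The key point is that the rigidity theorem gives more than abstract freeness: it identifies the generators. It states that a pre-Lie algebra $(V,\rhd)$ carrying a connected NAP coproduct subject to \eqref{equ} is the free pre-Lie algebra \emph{generated by} $\hbox{Prim}(V)=\{x\in V:\delta(x)=0\}$. Specialising to $V=\overline{\mathcal{K}}(\mathbb{U})$ and $\rhd={\searrow}$, we conclude that $\overline{\mathcal{K}}(\mathbb{U})$ is the free pre-Lie algebra generated by $\hbox{Prim}\big(\overline{\mathcal{K}}(\mathbb{U})\big)$. Since a free pre-Lie algebra on a set of generators is, by construction, generated as a pre-Lie algebra by that set — equivalently, under the Chapoton--Livernet model $\overline{\mathcal{K}}(\mathbb{U})\cong T\big(\hbox{Prim}(\overline{\mathcal{K}}(\mathbb{U}))\big)$, every decorated rooted tree is built from one-vertex trees by iterated grafting $\rightarrow$ — the claim follows.

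Because all the substantive work is already contained in Theorem \ref{Theorem} and in the cited rigidity theorem, there is no genuine obstacle in this last step; the only point requiring care is to invoke the generator-identifying form of the rigidity theorem (that the generating subspace can be taken to be precisely $\hbox{Prim}$) rather than the bare statement that the algebra is free.
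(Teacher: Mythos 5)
Your proposal is correct and follows essentially the same route as the paper: both arguments reduce to Livernet's rigidity results applied to the connected NAP coproduct $\delta$ and the compatibility established in Theorem \ref{Theorem}. The only cosmetic difference is that the paper re-verifies connectedness via the filtration of Definition \ref{def-connected} and cites Livernet's Corollary 3.9 (generation by primitives) directly, whereas you deduce generation from the generator-identifying form of the rigidity theorem already invoked for the preceding corollary; both are valid one-step applications of the same machinery.
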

	\begin{proof}
	   Let $\overline{\mathcal{K}}(\mathbb{U})_1=\hbox{ Prim}\big(\overline{\mathcal{K}}(\mathbb{U})\big)=\{ P\in \overline{\mathcal{K}}(\mathbb{U}), \delta(P)=0 \}$, and let $\overline{\mathcal{K}}(\mathbb{U})_n=\{ P\in \overline{\mathcal{K}}(\mathbb{U}), \delta(P)\in \sum \limits_{0<k<n}\overline{\mathcal{K}}(\mathbb{U})_k\otimes \overline{\mathcal{K}}(\mathbb{U})_{n-k}  \}$, we notice that $\overline{\mathcal{K}}(\mathbb{U})=\bigcup \limits_{k>0}\overline{\mathcal{K}}(\mathbb{U})_k$, i.e.
	   the vector space $(\overline{\mathcal{K}}(\mathbb{U}), \delta)$ is connected. By appling Corollary 3.9 in \cite{L. Muriel}, we obtain that $\overline{\mathcal{K}}(\mathbb{U})$ is generated as a pre-Lie algebra by $\hbox{ Prim}\big(\overline{\mathcal{K}}(\mathbb{U})\big)$.
	\end{proof}
	\begin{example}
	Here are the posets in $\hbox{ Prim}\big(\overline{\mathcal{K}}(\mathbb{U})\big)$ up to four vertices:
	$$
	\fcolorbox{white}{white}{
\scalebox{0.7}{
  \begin{picture}(5,17) (218,-263)
    \SetWidth{1.0}
    \SetColor{Black}
    \Vertex(226,-260){2}
  \end{picture}
}}\hspace{0.07cm}, 
\hspace{0.2cm}
	\fcolorbox{white}{white}{
\scalebox{0.7}{
  \begin{picture}(5,17) (218,-263)
    \SetWidth{1.0}
    \SetColor{Black}
    \Vertex(210,-260){2}
    \Vertex(226,-260){2}
    \Vertex(218,-249){2}
    \Line(210,-261)(219,-248)
    \Line(226,-260)(218,-250)
  \end{picture}
}}\hspace{0.07cm}, 
\hspace{0.2cm}\fcolorbox{white}{white}{
\scalebox{0.7}{
  \begin{picture}(5,17) (218,-263)
    \SetWidth{1.0}
    \SetColor{Black}
    \Vertex(210,-260){2}
     \Vertex(218,-260){2}
    \Vertex(226,-260){2}
    \Vertex(218,-249){2}
    \Line(210,-261)(219,-248)
    \Line(218,-260)(218,-248)
    \Line(226,-260)(218,-250)
  \end{picture}
}}\hspace{0.07cm},
\fcolorbox{white}{white}{
\scalebox{0.7}{
  \begin{picture}(18,27) (173,-248)
    \SetWidth{1.0}
    \SetColor{Black}
    \Vertex(177,-245){2}
    \Vertex(189,-245){2}
    \Vertex(183,-235){2}
    \Vertex(183,-223){2}
    \Line(177,-244)(182.3,-236.5)
    \Line(190,-244)(184,-235)
    \Line(183,-234)(183,-224)
  \end{picture}
}}, 
\fcolorbox{white}{white}{
\scalebox{0.7}{
  \begin{picture}(15,27) (174,-248)
    \SetWidth{1.0}
    \SetColor{Black}
    \Vertex(177,-245){2}
    \Vertex(183,-223){2}
    \Vertex(177,-235){2}
    \Vertex(187,-235){2}
    \Line(177,-245)(177,-234)
    \Line(177,-234)(184,-222)
    \Line(188,-235)(183,-223)
  \end{picture}
}},
\fcolorbox{white}{white}{
\scalebox{0.7}{
  \begin{picture}(26,39) (415,-324)
    \SetWidth{1.0}
    \SetColor{Black}
    \Vertex(433,-321){2}
    \Vertex(433,-309){2}
    \Vertex(418,-309){2}
    \Vertex(419,-321){2}
    \Line(418.5,-321)(418.5,-309)
    \Line(433,-322)(417,-308)
    \Line(433,-309)(417,-322)
    \Line(433,-322)(433,-309)
  \end{picture}
}}$$
	\end{example}
	\subsection{Duality relation between $\circleright$ and $\delta$}
	In this subsection, we prove that the NAP product $\circleright$ and the NAP coproduct $\delta$ are dual to each other modulo a symmetry factor.
	\begin{definition}
	Let $G$ be a group acting on $X$. For every $x\in X$:\\
	- we denote by $G\cdot x:=\{ g\cdot x, g\in G  \}$ the orbit of $x$,\\
	- we denote by $G_{x}=\{ g\in G, g\cdot x=x \}$ the stabilizer subgroup of $G$ with respect to $x$.
	\end{definition}
	\begin{proposition}
	Let $G$ be a group acting on $X$, if $G$ and $X$ is finite, then the orbit-stabilizer theorem, together with Lagrange’s theorem
\cite[Theorem 3.9]{acg..0}, gives:
    \begin{equation*}
        {\displaystyle |G\cdot x|=[G_{x}\,:\,G]=\dfrac{|G|}{|G_{x}|}}.
    \end{equation*}
In particular, the cardinal of the orbit is a divisor of the group order.
	\end{proposition}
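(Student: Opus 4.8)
The plan is to exhibit an explicit bijection between the orbit $G\cdot x$ and the set $G/G_x$ of left cosets of the stabilizer, and then to invoke Lagrange's theorem to count the latter. First I would record that $G_x$ is in fact a subgroup of $G$: it contains the identity, it is closed under the group law (if $g\cdot x=x$ and $h\cdot x=x$ then $(gh)\cdot x=g\cdot(h\cdot x)=g\cdot x=x$), and it is closed under inverses (if $g\cdot x=x$ then $g^{-1}\cdot x=g^{-1}\cdot(g\cdot x)=(g^{-1}g)\cdot x=x$). This is what makes the index $[G:G_x]$ meaningful and Lagrange's theorem applicable.

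Next, I would define a map $\phi:G/G_x\longrightarrow G\cdot x$ by $\phi(gG_x)=g\cdot x$. The key step is to check that $\phi$ is well defined and injective at once, via the chain of equivalences
\[
gG_x=hG_x \iff h^{-1}g\in G_x \iff (h^{-1}g)\cdot x=x \iff g\cdot x=h\cdot x,
\]
where the last equivalence uses that the action is by bijections of $X$. Surjectivity of $\phi$ is immediate from the very definition of the orbit $G\cdot x=\{g\cdot x, g\in G\}$. Hence $\phi$ is a bijection, and since $G$ is finite (so $X$, the orbit, and $G/G_x$ are finite) we get $|G\cdot x|=|G/G_x|=[G:G_x]$.

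Finally, Lagrange's theorem \cite[Theorem 3.9]{acg..0} gives $|G|=[G:G_x]\,|G_x|$, hence $[G:G_x]=|G|/|G_x|$, and combining this with the count above yields $|G\cdot x|=|G|/|G_x|$. In particular $|G_x|$ divides $|G|$, so the cardinality of the orbit is a divisor of the group order. I do not expect any real obstacle here: the only genuinely substantive point is the double equivalence showing $\phi$ is well defined and injective, and that reduces to the group axioms together with the defining properties of a (left) action; the remaining steps are bookkeeping.
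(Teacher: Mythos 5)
Your proof is correct and is precisely the standard orbit--stabilizer argument that the paper invokes by citation (the paper offers no proof of its own, simply referring to the orbit-stabilizer theorem and Lagrange's theorem in \cite[Theorem 3.9]{acg..0}). The only discrepancy is in the paper's statement itself, where the index is misprinted as $[G_x:G]$; your $[G:G_x]$ is the correct reading.
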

\begin{definition}
For any poset $P$ on a finite set $X$,
    we denote by $\hbox{\hbox{Aut}}(P)$ the subgroup of permutations of $X$ which are homeomorphisms with respect to $P$. The symmetry factor is defined by $\sigma(P)=| \hbox{\hbox{Aut}}(P) |$.
    Let $X_1, X_2$ two finite sets, we define the linear map $\langle, \rangle:\mathbb{U}_{X_1}\otimes \mathbb{U}_{X_2}\longrightarrow \mathbb{K}$ by:
\[\langle Q,R\rangle=\begin{cases}
\sigma(Q)\mbox{ if }Q\approx R,\\
0\mbox{ otherwise}.
\end{cases}\]
In other terms, $\langle Q,R\rangle$ is the number of isomorphisms between $Q$ and $R$. 
    \end{definition}
     \begin{theorem}\label{thm1}
        Let $P, Q$ and $R$ three finite connected posets. We have the following identity
    $$\langle \delta(P), Q\otimes R\rangle=\frac{1}{|min(P)|}\langle P, Q\circleright R\rangle.$$
    \end{theorem}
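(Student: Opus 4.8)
The plan is to turn both sides into combinatorial counts and then match them by an explicit bijection. First I would use bilinearity of $\langle\,,\,\rangle$ together with $\delta(P)=\frac{1}{|min(P)|}\sum_{I\circledcirc P}I\otimes(P\backslash I)$ and $Q\circleright R=\sum_{v\in\min(R)}Q\searrow_v R$ to reduce the asserted identity (the factor $\frac{1}{|min(P)|}$ being common to both sides) to
$$\sum_{I\circledcirc P}\langle I,Q\rangle\,\langle P\backslash I,R\rangle=\sum_{v\in\min(R)}\langle P,Q\searrow_v R\rangle.$$
Reading $\langle\,,\,\rangle$ as ``number of isomorphisms'', the left-hand side counts triples $(I,\alpha,\beta)$ with $I\circledcirc P$, $\alpha\colon I\to Q$ and $\beta\colon P\backslash I\to R$ isomorphisms (this makes sense because $P\backslash I$ is a connected poset by Lemma \ref{lem}(3)), and the right-hand side counts pairs $(v,\gamma)$ with $v\in\min(R)$ and $\gamma\colon P\to Q\searrow_v R$ an isomorphism. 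So it suffices to biject these two finite sets.

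Next I would isolate the structural fact that drives the bijection: if $I\circledcirc P$ and $I_-=\{y\}$ (so $y\in\min(P)$), then $P$ is exactly $I$ grafted above the point $y$ of $P\backslash I$; concretely, $y<_P a$ for every $a\in I$, no element of $P\backslash I$ other than $y$ is $<_P$ anything in $I$, and no element of $I$ is $<_P$ anything in $P\backslash I$. All three come straight from the definitions: $I\subseteq\{x:y<_Px\}$ gives the first; $b<_P a$ with $a\in I$, $b\notin I$ forces $b\in I_-=\{y\}$; and $a<_P b$ with $a\in I$, $b\notin I$ puts $b$ in $\{x:y<_Px\}$ and makes it comparable to $a$, hence in the connected component $I$, a contradiction. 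Dually I would check, straight from the definition of $\searrow_v$: for $v\in\min(R)$ one has $v\in\min(Q\searrow_v R)$, the only relations across the $Q$/$R$ cut are $v<q$ for $q\in Q$, so $Q$ is a connected component of $\{x:v<x\}$ with $Q_-=\{v\}$, i.e. $Q\circledcirc(Q\searrow_v R)$, and $(Q\searrow_v R)\backslash Q=R$.

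Then the bijection is $(I,\alpha,\beta)\mapsto(v,\gamma)$ with $v:=\beta(y)$ — which lies in $\min(R)$ since $y\in\min(P\backslash I)$ by Lemma \ref{lem}(2) — and $\gamma:=\alpha\sqcup\beta$; the structural fact shows $\alpha\sqcup\beta$ is a poset isomorphism $P\to Q\searrow_v R$, checked by comparing the order relation across the four types of pairs of vertices. The inverse sends $(v,\gamma)$ to $(\gamma^{-1}(Q),\gamma|_{\gamma^{-1}(Q)},\gamma|_{\gamma^{-1}(R)})$, where the dual structural fact shows $\gamma^{-1}(Q)\circledcirc P$ with $(\gamma^{-1}(Q))_-=\{\gamma^{-1}(v)\}$; that the two maps are mutually inverse is immediate. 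Summing and reinstating $\frac{1}{|min(P)|}$ finishes the proof. The main obstacle is the grafting structural fact on the $I$/$P\backslash I$ side — it is elementary but uses the same connected-component bookkeeping as Lemma \ref{lem} — together with being careful about the orientation convention in $\searrow_v$ (which poset sits above which), so that $v=\beta(y)$ genuinely lands in $\min(R)$ and $\gamma$ respects the order across the $Q$/$R$ cut.
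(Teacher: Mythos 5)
Your proposal is correct and follows essentially the same route as the paper: both sides are interpreted as counting the sets of triples $(I,\alpha,\beta)$ and pairs $(v,\gamma)$ respectively, and these are matched by the bijection $v=\beta(I_-)$, $\gamma=\alpha\sqcup\beta$, with inverse $I=\gamma^{-1}(Q)$ and the restrictions of $\gamma$. Your explicit verification that $P=I\searrow_{I_-}(P\setminus I)$ and that $Q\circledcirc(Q\searrow_v R)$ actually supplies detail that the paper's proof only asserts.
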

   \begin{proof}
    Let $P, Q$ and $R$ three finite connected posets.
       \begin{align*}
\langle \delta(P),Q\otimes R\rangle&=\frac{1}{|\min(P)|} \sum_{\substack{I\circledcirc P\\I\approx Q\\ 
P\setminus I \approx R}}  \sigma(Q)\sigma(R)=\frac{1}{|\min(P)|} |A|
\end{align*}
with 
\[A=\{(I,\sigma_1,\sigma_2)\mid I\circledcirc P,\: \sigma_1:I\stackrel{\sim}{\longrightarrow} Q, \sigma_2:P\setminus I
\stackrel{\sim}{\longrightarrow} R\}\]
and:
\begin{align*}
\langle P,Q\circleright R\rangle&=\sum_{\substack{v\in \hbox{ \tiny{min}}(R)\\ Q\searrow_v R\approx P}}\sigma(P)=|B|
\end{align*}
with
\[B=\{(v,\sigma)\mid v\in \min(R),\: \sigma:P\stackrel{\sim}{\longrightarrow}Q\searrow_vR\}.\]

Let us define now a map $\phi:A\longrightarrow B$. Let $(I,\sigma_1,\sigma_2)\in A$. We put $w=I_-$.
As $I\circledcirc P$, $w\in \min(P\setminus I)$, so $v=\sigma_2(w)\in \min(R)$. As $I\circledcirc P$,
$P=I\searrow_w (P\setminus I)$, so we obtain an isomorphism $\sigma:P\longrightarrow Q\searrow_v R$
by taking
\[\sigma(x)=\begin{cases}
\sigma_1(x)\mbox{ if }x\in I,\\
\sigma_2(x)\mbox{ otherwise}.
\end{cases}\]
We then put $\phi(I,\sigma_1,\sigma_2)=(v,\sigma)$. 

Now, we define a map $\psi:B\longrightarrow A$. If $(v,\sigma)\in B$, we put $I=\sigma^{-1}(Q)$. 
As $Q\circledcirc Q\searrow_v R$, $I\circledcirc P$. Moreover, $\sigma_1=\sigma_{\mid I}$
is a graph isomorphism from $I$ to $Q$ and $\sigma_2=\sigma_{\mid P\setminus I}$ is a graph isomorphism
from $P\setminus I$ to $R$. We put $\psi(v,\sigma)=(I,\sigma_1,\sigma_2)$. 

Let $(v,\sigma)\in B$. We put $\psi(v, \sigma)=(I,\sigma_1,\sigma_2)$ and $\phi\circ \psi(v,\sigma)=(v',\sigma')$. 
Then, $I=\sigma^{-1}(Q)$ and $w'=I_-=w$, so $v'=\sigma(w)=v$. Moreover, 
\begin{align*}
\sigma'_{\mid I}&=\sigma_1=\sigma_{\mid I},\\
\sigma'_{\mid P\setminus I}&=\sigma_2=\sigma_{\mid P\setminus I},
\end{align*}
so $\sigma'=\sigma$. Hence, $\phi\circ \psi=\id_B$. 

Let $(I,\sigma_1,\sigma_2)\in A$. We put $\phi(I,\sigma_1,\sigma_2)=(v,\sigma)$ and  
$\psi\circ \phi(I,\sigma_1,\sigma_2)=(I',\sigma'_1,\sigma'_2)$. Then $I'=\sigma^{-1}(Q)=I$,
by construction of $\sigma$. Even more, 
\begin{align*}
\sigma_1'&=\sigma_{\mid I}=\sigma_1,\\
\sigma_2'&=\sigma_{\mid P\setminus I}=\sigma_2.
\end{align*}
So $\psi\circ \phi=\id_A$.
Finally, $A$ and $B$ are in bijection and we obtain:
\[\langle \delta(P),Q\otimes R\rangle=\frac{1}{|\min(P)|} \langle P,Q\circleright R\rangle.\] 
    \end{proof}
    \begin{remark}
As mentioned in the paper \cite{Moh. twisted}, the binary product $\nearrow$ on the species $\mathbb{U}$ of finite connected posets, defined by $P\nearrow Q:=j\big( j(P)\searrow j(Q) \big)$
for any pair $(P, Q)$ of finite connected posets,
  is a pre-Lie product, where $j$ is the involution which transforms $\le$ into $\ge$. Similarly,
 $$P \circleddotright Q:=j\big( j(P)\circleright j(Q) \big), \hspace{1cm}(\hbox{ resp. } \hspace{0.5cm} \tilde{\delta}= (j \otimes j)\delta \circ j)$$
  endows the species $\mathbb{U}$ with a twisted NAP algebra (resp. twisted connected NAP coalgebra) structure. 
    \end{remark}
    \textbf{Notation.} Let $E$ be any finite set, and let $\pi, \rho$ be two partitions of $E$. We denote by $j(\pi, \rho)$ the rational number
    $$j(\pi, \rho):=\frac{1}{|E|}\sum \limits_{\underset{\beta \hbox{ \tiny{is a block of }} \rho}{\alpha \hbox{ \tiny{is a block of }}} \pi} |\alpha \cap \beta|\frac{|\beta|}{|\alpha|}.$$
    
    \begin{proposition}\label{theorem.}
    Let $P, Q$ and $R$ three finite connected posets. We denote by $E=\{ v, v\in min(R)\mid P\approx Q\searrow_v R  \}$. Let $\pi$ be the partition of $E$ into Aut(P)-orbits, and let $\rho$ be the partition of $E$ into Aut(R)-orbits.
   We have the following identity
    $$j(\pi, \rho)=1.$$
    \end{proposition}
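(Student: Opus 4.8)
The plan is to unwind the definition of $j(\pi,\rho)$ using the two group actions on $E$ and the orbit–stabilizer theorem (Proposition~3.6 in the excerpt). Set $n=|E|$. The key observation is that, since $P\approx Q\searrow_v R$ for every $v\in E$, the set $E$ is stable under $\hbox{Aut}(R)$: if $v\in E$ and $g\in\hbox{Aut}(R)$, then $Q\searrow_{g(v)}R\approx Q\searrow_v R\approx P$, so $g(v)\in E$. Thus $\rho$ really is a partition of $E$ into $\hbox{Aut}(R)$-orbits. For the other action, the bijection $B\cong A$ from the proof of Theorem~\ref{thm1}, specialized to the case $P$, $Q$, $R$ with $P\approx Q\searrow_v R$, exhibits $\hbox{Aut}(P)$ as acting on the set of pairs $(v,\sigma)$ with $\sigma:P\xrightarrow{\sim}Q\searrow_v R$; projecting to the first coordinate gives the action whose orbits are the blocks of $\pi$, and I would record that each fiber over a fixed $v\in E$ has size $\sigma(Q\searrow_v R)=\sigma(P)$ (the number of automorphisms of $P$, equivalently of $Q\searrow_v R$).

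Next I would compute the two block sizes. For a block $\beta$ of $\rho$, i.e. an $\hbox{Aut}(R)$-orbit of some $v$, orbit–stabilizer gives $|\beta|=\sigma(R)/|\hbox{Aut}(R)_v|$. For a block $\alpha$ of $\pi$, i.e. an $\hbox{Aut}(P)$-orbit of some $v$, I need the stabilizer of $v$ under the $\hbox{Aut}(P)$-action on $E$; here orbit–stabilizer gives $|\alpha|=\sigma(P)/(\text{size of the stabilizer})$. The crucial step is to identify these two stabilizers and, more precisely, to relate $|\alpha\cap\beta|$ to them: I expect that for $v\in\alpha\cap\beta$, the intersection $\alpha\cap\beta$ is exactly the orbit of $v$ under the joint action, and that a counting of pairs $(v,\sigma)$ lying over $\alpha\cap\beta$ can be done in two ways — once via $\hbox{Aut}(P)$ and once via $\hbox{Aut}(R)$ — yielding the identity $|\alpha\cap\beta|\,\dfrac{|\beta|}{|\alpha|}=|\beta'|$ for an appropriate refinement, or more directly that $\sum_{\alpha}|\alpha\cap\beta|\,\frac{|\beta|}{|\alpha|}=|\beta|$ for each fixed $\beta$. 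Summing over the blocks $\beta$ of $\rho$ then gives $\sum_{\alpha,\beta}|\alpha\cap\beta|\frac{|\beta|}{|\alpha|}=\sum_\beta|\beta|=|E|=n$, and dividing by $|E|$ yields $j(\pi,\rho)=1$.

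Concretely, I would argue the inner identity as follows. Fix a block $\beta$ of $\rho$ and pick $v_0\in\beta$. For each $v\in\beta$ choose $g_v\in\hbox{Aut}(R)$ with $g_v(v_0)=v$; these translates carry the $\hbox{Aut}(P)$-orbit structure around, so the blocks $\alpha$ of $\pi$ meeting $\beta$ all have the same size $|\alpha|=:a_\beta$, and $\beta$ is partitioned into $|\beta|/a_\beta$ such pieces — hence $\sum_{\alpha:\alpha\cap\beta\neq\emptyset}|\alpha\cap\beta|\frac{|\beta|}{|\alpha|}=\frac{|\beta|}{a_\beta}\cdot a_\beta\cdot\frac{|\beta|}{|\beta|}$; I will need to be careful here and instead observe directly that for each of the $|\beta|/a_\beta$ blocks $\alpha$ meeting $\beta$ one has $|\alpha\cap\beta|=a_\beta$ (since $\alpha\subseteq\beta$: any two elements of $E$ in the same $\hbox{Aut}(P)$-orbit have isomorphic $Q\searrow_v R$, but to get them into the same $\hbox{Aut}(R)$-orbit requires the homogeneity coming from $P\approx Q\searrow_v R$, which forces $\alpha\subseteq\beta$). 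Granting $\alpha\subseteq\beta$, the sum over $\alpha\subseteq\beta$ of $|\alpha|\frac{|\beta|}{|\alpha|}=|\beta|$ telescopes to $(\#\{\alpha\subseteq\beta\})\cdot|\beta|$...

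The main obstacle is precisely pinning down whether $\pi$ refines $\rho$ (i.e. $\alpha\subseteq\beta$) — equivalently, showing that the $\hbox{Aut}(P)$-action on $E$, transported through the isomorphisms $\sigma:P\xrightarrow{\sim}Q\searrow_vR$, is subordinate to the $\hbox{Aut}(R)$-action. If $\pi$ refines $\rho$, then each block $\beta$ is a disjoint union of blocks $\alpha$, all of common size $a_\beta$, there are $|\beta|/a_\beta$ of them, and
\[
\sum_{\substack{\alpha\text{ block of }\pi\\ \beta\text{ block of }\rho}}|\alpha\cap\beta|\,\frac{|\beta|}{|\alpha|}
=\sum_{\beta}\ \sum_{\alpha\subseteq\beta}|\alpha|\,\frac{|\beta|}{|\alpha|}
=\sum_{\beta}\frac{|\beta|}{a_\beta}\,|\beta|,
\]
which is $|E|$ only if $a_\beta=|\beta|$ for every $\beta$, i.e. only if $\pi=\rho$. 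Since that need not hold, the correct route must instead exploit the symmetry $\langle P,Q\searrow_v R\rangle=\sigma(P)$ to count the set $B$ of Theorem~\ref{thm1} in two ways over each $\hbox{Aut}(R)$-orbit, giving $\sum_{v\in\beta}\frac{1}{|\alpha_v|}=\frac{|\beta|}{\sigma(R)}\cdot\frac{\sigma(R)}{\sigma(P)}\cdot\frac{\sigma(P)}{|\beta|}$-type cancellations; I would carry out this double count carefully, using $|\hbox{Aut}(R)_v|\cdot|\beta|=\sigma(R)$ and the analogous stabilizer identity for $\pi$, to extract $\sum_{v\in E}\frac{1}{|\alpha_v|}\cdot\frac{\text{(something)}}{\text{(something)}}=|E|$, and then recognize the left side as $|E|\cdot j(\pi,\rho)$. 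This bookkeeping with the two stabilizer subgroups, and verifying that the relevant "something" factors cancel exactly because both $P\approx Q\searrow_v R$, is where the real work lies; everything else is a routine application of orbit–stabilizer and Lagrange.
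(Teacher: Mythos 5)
There is a genuine gap: your text is an outline that stops exactly where the proof has to be done. You correctly assemble the ingredients (the two actions on $E$, orbit--stabilizer, and the duality of Theorem~\ref{thm1}), and you correctly diagnose that your first route fails --- if $\pi$ refined $\rho$, the sum $\sum_{\alpha,\beta}|\alpha\cap\beta|\,|\beta|/|\alpha|$ would be $\sum_\beta |\beta|^2/a_\beta$, which equals $|E|$ only when $\pi=\rho$; and indeed neither partition need refine the other. But the ``correct route'' you then point to is never executed: the last paragraph lists cancellations you hope will occur (``the relevant something factors cancel'') without verifying any of them, and no identity of the form $j(\pi,\rho)=1$ is ever derived.

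The missing content is concrete. One must evaluate $\langle\delta(P),Q\otimes R\rangle$ a second time, independently of Theorem~\ref{thm1}, by grouping the subsets $I\circledcirc P$ with $I\approx Q$ and $P\setminus I\approx R$ according to the vertex $v$ with $I_-=\{v\}$; writing $N_v(Q,R)$ for the number of such branches over $v$, the key stabilizer identity, absent from your proposal, is
\[
\sigma_v(Q\searrow_v R)=\sigma(Q)\,\sigma_v(R)\,N_v(Q,R),
\]
where $\sigma_v$ denotes the order of the stabilizer of $v$. This is precisely what converts the branch count into a ratio of orbit sizes: combined with orbit--stabilizer for $\mathrm{Aut}(P)\cong\mathrm{Aut}(Q\searrow_v R)$ and for $\mathrm{Aut}(R)$, it gives
\[
\langle\delta(P),Q\otimes R\rangle=\frac{\sigma(P)}{|\min(R)|}\sum_{v\in E}\frac{|\mathrm{Aut}(R)\cdot v|}{|\mathrm{Aut}(P)\cdot v|}=\frac{\sigma(P)}{|\min(R)|}\,|E|\,j(\pi,\rho),
\]
and comparing with the value $\sigma(P)\,|E|/|\min(P)|$ furnished by Theorem~\ref{thm1} (noting $|\min(P)|=|\min(R)|$, since $P\approx Q\searrow_v R$ and $\min(Q\searrow_v R)=\min(R)$) forces $j(\pi,\rho)=1$. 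Without the displayed identity and without actually equating the two evaluations of $\langle\delta(P),Q\otimes R\rangle$, the proposition is not established.
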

    
    \begin{proof}
   Let $P, Q$ and $R$ be three finite connected posets. In fact: firstly,
    \begin{align*}
        \langle P, Q\circleright R\rangle &=\sum_{v\in \tiny{\hbox{min}}(R)}\langle P, Q\searrow_v R\rangle \\
        &=\sum \limits_{\underset{P \approx Q\searrow_v R}{v\in \tiny{\hbox{min}}(R)}} \sigma(P)\\
        &=\sigma(P)|E|.
    \end{align*}
    On the other hand 
    \begin{align*}
        \langle \delta(P), Q\otimes R\rangle &=\frac{1}{|min(P)|}\sum_{I\circledcirc P} \langle I, Q\rangle \langle P\backslash I, R\rangle\\
        &=\frac{1}{|min(P)|}\sum \limits_{\underset{I\approx Q,\, P\backslash I\approx R}{I\circledcirc P}} \sigma(Q)\sigma(R)\\
        &=\frac{1}{|min(R)|}\sum \limits_{\underset{I\approx Q,\, P\backslash I\approx R}{I\circledcirc P}} \sigma(Q)\sigma(R)\\
        &=\frac{1}{|min(R)|}\sum_{v\in \tiny{\hbox{min}}(R)}\sum \limits_{\underset{I\approx Q,\, P\backslash I\approx R}{I\circledcirc P,\, I_-=\{v\}}} \sigma(Q)\sigma(R)\\
        &= \frac{1}{|min(R)|}\sum_{v\in \tiny{\hbox{min}}(R)}N_v(P, Q, R) \sigma(Q)\sigma(R),
    \end{align*}
    where $N_v(P, Q, R):=$ the number of branches $I$ of $P$ above $v$ isomorphic to $Q$ such that $P\backslash I$ isomorphic to $R$.\\
      We notice that, for all $v\in min(R)$: 
    \begin{itemize}
        \item if $P\napprox Q\searrow_v R$, then $N_v(P, Q, R)=0$,
        \item if $P\approx Q\searrow_v R$, then $N_v(P, Q, R)=N_v(Q, R):=$ the number of branches of $Q\searrow_v R$ above $v$ isomorphic to $Q$.
    \end{itemize}
    Accordingly 
     \begin{align*}
        \langle \delta(P), Q\otimes R\rangle &=\frac{1}{|min(R)|}\Big( \sum \limits_{\underset{P\approx Q\searrow_v R}{v\in \tiny{\hbox{min}}(R)}} N_v(P, Q, R) \sigma(Q)\sigma(R)  + \sum \limits_{\underset{P\ne Q\searrow_v R}{v\in \tiny{\hbox{min}}(R)}} N_v(P, Q, R) \sigma(Q)\sigma(R) \Big)\\
        &= \frac{1}{|min(R)|} \sum \limits_{\underset{P\approx Q\searrow_v R}{v\in \tiny{\hbox{min}}(R)}} N_v(P, Q, R) \sigma(Q)\sigma(R)\\
        &=\frac{1}{|min(R)|} \sum \limits_{\underset{P\approx Q\searrow_v R}{v\in \tiny{\hbox{min}}(R)}} N_v(Q, R) \sigma(Q)\sigma(R).
    \end{align*}
    By using the orbit-stabilizer theorem, as well as Lagrange's theorem for the group $\big( Aut(R), \circ \big)$ endowed with the law of composition, we therefore have:
  $$|Aut(R)\cdot v|=\frac{|Aut(R)|}{|Aut(R)_v|},\hspace{0.5cm} i.e.\hspace{0.5cm} |Aut(R)\cdot v|=\frac{\sigma(R)}{\sigma_v(R)}.$$
  We notice that, for all $v\in \hbox{min}(R)$: $\sigma_v(Q\searrow_v R)=\sigma(Q)\sigma_v(R)N_v(Q, R)$.\\
  Then 
  \begin{align*}
        \langle \delta(P), Q\otimes R\rangle &= \frac{1}{|min(R)|} \sum \limits_{\underset{}{v\in E}}  \sigma(Q)\sigma_v(R)|Aut(R)\cdot v|N_v(Q, R)\\
        &=\frac{1}{|min(R)|} \sum \limits_{\underset{}{v\in E}}  \sigma_v(P)|Aut(R)\cdot v|\\
        &=\frac{1}{|min(R)|}\sum \limits_{\underset{}{v\in E}} \sigma(P)\frac{|Aut(R)\cdot v|}{|Aut(P)\cdot v|}\\
        &=\frac{1}{|min(R)|}\sigma(P) S,
    \end{align*}
    where $$S= \sum \limits_{\underset{}{v\in E}} \frac{|Aut(R)\cdot v|}{|Aut(P)\cdot v|}.$$
    Let $\pi$ be the partition of $E$ into $Aut(P)$-orbits, and let $\rho$ be the partition of $E$ into $Aut(R)$-orbits. We denote by $\alpha \in \pi$ (respectively, $\beta \in \rho$), if $\alpha$ is a block of $\pi$ (respectively, if $\beta$ is a block of $\rho$).\\
    So
    \begin{align*}
        S&= \sum \limits_{\underset{}{v\in E}} \frac{|Aut(R)\cdot v|}{|Aut(P)\cdot v|}\\
        &=\sum \limits_{\underset{}{v\in E}}\sum \limits_{\underset{\beta \hbox{ \tiny{is a Aut(R)-orbit of v} }}{\alpha \hbox{ \tiny{is a Aut(P)-orbit of v} }}} \frac{|\beta|}{|\alpha|}\\
        &=\sum \limits_{\underset{\beta \hbox{ \tiny{is a Aut(R)-orbit} }}{\alpha \hbox{ \tiny{is a Aut(P)-orbit} }}} |\alpha \cap \beta|\frac{|\beta|}{|\alpha|}\\
        &=j(\pi, \rho),
    \end{align*}
    then 
    $\langle \delta(P), Q\otimes R\rangle =\frac{1}{|min(R)|}\sigma(P)j(\pi, \rho)$. Correspondingly,
    $$\langle \delta(P), Q\otimes R\rangle=\langle P, Q\circleright R\rangle \frac{j_{P, R}(\pi, \rho)}{|min(P)|}.$$
    
     By appling Theorem \ref{thm1}, we obtain: 
        $$\langle \delta(P), Q\otimes R\rangle=\frac{1}{|min(P)|}\langle P, Q\circleright R\rangle=\langle P, Q\circleright R\rangle \frac{j(\pi, \rho)}{|min(P)|}.$$
    
    Conclusion: for $P, Q$ and $R$ three finite connected posets, if we denote by $E=\{ v, v\in min(R)\mid P\approx Q\searrow_v R  \}$, and for $\pi$ (resp. $\rho$) the partition of $E$ into $Aut(P)$-orbits (resp. the partition of $E$ into $Aut(R)$-orbits). Then we obtain
    $$j(\pi, \rho)=1.$$
    \end{proof}
    \begin{remark}
   We do not have $j(\pi, \rho)=1$ in general for two partitions of a finite set $E$.
Indeed:
let $E$ be the finite set $\{1,2,3,4\}$, $\pi=\{ \{1,2\}, \{3,4\}  \}$ and $\rho=\{ \{1,2,3\},\{4\} \}$. Then we obtain $j(\pi, \rho)=\frac{5}{4}$ and $j(\rho, \pi)=1$.
This interesting index does not seem to be present in the literature.
    \end{remark}
    

\subsection{Free Lie algebras of finite posets}
F. Chapoton in \cite{F. Chapoton.} showed that free pre-Lie algebras are free as Lie algebras i.e., if $(V, \rhd)$
is a free pre-Lie algebra. Then it is a free Lie algebra for the
Lie bracket $[, ]$ defined by $[a, b]=a\rhd b -b\rhd a$. This result has been obtained before with different methods by L. Foissy \cite[Theorem 8.4]{L. Foissy}.
\begin{proposition}\cite{L. Foissy, F. Chapoton.}
For any vector space $V$, the free pre-Lie algebra on $V$ is isomorphic as a
Lie algebra to the free Lie algebra on some set $X(V)$ of generators.
\end{proposition}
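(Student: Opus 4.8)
The plan is to reduce the statement to a structural property of the associated universal enveloping algebra. Write $\mathrm{PL}(V)$ for the free pre-Lie algebra on $V$; by \cite{acg5} (together with the description of decorated rooted trees given above) we may realise $\mathrm{PL}(V)$ as the space $T(V)$ of rooted trees decorated by a basis of $V$, with the grafting product $\rightarrow$, and the Lie bracket under consideration is $[x,y]=x\rightarrow y-y\rightarrow x$. Set $\mathfrak{g}=\big(T(V),[\,,\,]\big)$; this is a connected graded Lie algebra, and the goal is to prove $\mathfrak{g}\cong\mathrm{FreeLie}\big(X(V)\big)$ for a suitable graded set $X(V)$.

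The first key step is to identify $U(\mathfrak{g})$. By the Guin--Oudom construction of the enveloping algebra of a pre-Lie algebra, $U(\mathfrak{g})$ is isomorphic, as a connected graded cocommutative Hopf algebra, to the symmetric space $S\big(T(V)\big)$ --- the linear span of $V$-decorated rooted forests --- equipped with the Grossman--Larson product; denote this Hopf algebra by $\mathcal{H}_{GL}(V)$. The second key step is the fact, due to \cite{L. Foissy} (and obtained by a different, operadic route in \cite{F. Chapoton.}), that $\mathcal{H}_{GL}(V)$ is a \emph{free associative algebra}. Concretely, $\mathcal{H}_{GL}(V)$ is the graded dual of the (decorated) Connes--Kreimer Hopf algebra of rooted forests, and this assertion is equivalent to the latter being cofree as a connected graded coalgebra; the Hilbert series of a graded generating space $W$ with $\mathcal{H}_{GL}(V)\cong T(W)$ is then forced by comparing the rooted-forest counting series with $\prod_{i}(1-t^{i})^{-\dim\mathfrak{g}_{i}}$.

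It remains to pass from ``$U(\mathfrak{g})$ free associative'' to ``$\mathfrak{g}$ free as a Lie algebra''. For this I would invoke the criterion that, for a connected graded Lie algebra, the enveloping algebra is free associative if and only if the Lie algebra is free. The relevant direction runs as follows: writing $U(\mathfrak{g})\cong T(W)$, connectedness and cocommutativity give, via the Cartier--Milnor--Moore theorem, $U(\mathfrak{g})=U(\mathrm{Prim})$ with $\mathrm{Prim}\,U(\mathfrak{g})=\mathfrak{g}$; applying the first Eulerian idempotent to a graded complement $W$ of $U(\mathfrak{g})_{+}^{2}$ in $U(\mathfrak{g})_{+}$ produces a graded linear map $W\to\mathfrak{g}$ which one checks is injective and whose image freely generates $\mathfrak{g}$, whence $\mathfrak{g}\cong\mathrm{FreeLie}(W)$. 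Taking $X(V)$ a graded basis of $W$ completes the proof.

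\smallskip
\noindent\emph{On the main obstacle.} The first and third steps are essentially formal manipulations with enveloping algebras and connected graded Hopf algebras; the genuine content sits in the second step, the freeness of the Grossman--Larson Hopf algebra, which is exactly what \cite{L. Foissy} and \cite{F. Chapoton.} supply. A secondary point needing care is checking that the Eulerian-idempotent section $W\to\mathfrak{g}$ really lands in the primitives and freely generates them --- equivalently, that the algebra isomorphism $U(\mathfrak{g})\cong T(W)$ can be upgraded so as to read off a free Lie generating set, rather than merely matching graded dimensions.
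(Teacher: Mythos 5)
The paper offers no proof of this proposition: it is quoted verbatim from \cite{F. Chapoton.} and \cite[Theorem 8.4]{L. Foissy}, so there is no internal argument to compare yours against. Your sketch is, in substance, a reconstruction of Foissy's Hopf-algebraic route, and its architecture is sound: realise the free pre-Lie algebra on $V$ as decorated rooted trees $T(V)$ with grafting, identify the enveloping algebra of the associated Lie algebra $\mathfrak{g}$ with the decorated Grossman--Larson Hopf algebra via the Guin--Oudom construction, invoke its freeness as an associative algebra, and descend to freeness of $\mathfrak{g}$ as a Lie algebra. Be aware, though, that the freeness of the Grossman--Larson algebra is essentially the content of the theorems being cited (Foissy establishes the freeness of the dual Hopf algebra and of its primitive Lie algebra together), so what you have is a derivation from the quoted literature rather than an independent proof --- acceptable here, since the proposition itself is a quotation, but worth saying explicitly.

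The one step you leave genuinely open is the passage from ``$U(\mathfrak{g})\cong T(W)$ as graded algebras'' to ``$\mathfrak{g}$ is a free Lie algebra.'' It can be closed in two standard ways. Homologically: $T(W)$ has global dimension one, so $H_n(\mathfrak{g})=\mathrm{Tor}_n^{U(\mathfrak{g})}(\mathbf{k},\mathbf{k})$ vanishes for $n\ge 2$, and a connected graded Lie algebra in characteristic zero with vanishing $H_2$ is free. Alternatively, your Eulerian-idempotent argument does complete: injectivity of $e_1$ on a graded complement $W$ of $U_+^2$ follows from $\ker e_1\subseteq U_+^2$ together with $W\cap U_+^2=0$; the Lie subalgebra generated by $e_1(W)$ is all of $\mathfrak{g}$ because $e_1(w)\equiv w \bmod U_+^2$, so $e_1(W)$ generates $U(\mathfrak{g})$ as an algebra by graded Nakayama; and the resulting surjection $\mathrm{FreeLie}(W)\twoheadrightarrow\mathfrak{g}$ is an isomorphism because PBW and $U(\mathfrak{g})\cong T(W)$ force equality of Hilbert series. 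This last comparison needs finite dimensionality in each degree, so for infinite-dimensional $V$ you should refine to the multigrading by a basis of $V$ or pass to a colimit over finite-dimensional subspaces.
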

\textbf{Application}\cite{acg1} Let $H$ be a Hopf algebra and 
 $H^0$ the graded dual of $H$. The primitive element algebra of the graded dual $H^0$ with the bracket $[,]$ is a Lie algebra. We denote by $\star$ the Grossman-Larson product 
on the dual of $H$ \cite{Moh. twisted}. This product restricted to $H^0$ is the graded dual of the coproduct $\Delta_{\searrow}$\cite{Moh. twisted}.
Applying the general setting above to the case $H=\overline{\mathcal{K}}(\mathbb{P})$ we are studying, we obtain:\\
 $\big(\overline{\mathcal{K}}(\mathbb{U}), \searrow \big)=\big(Prim(H^0), \searrow \big)$ is a free pre-Lie algebra, then it is also a free Lie algebra,
therefore $U(Prim H^0)=\big(S(PrimH^0), \star \big)$ is a free associative algebra.
Then, $\big(S(PrimH^0), \star\big)^0=(H, \Delta_{\searrow}) $ is a coassociative cofree coalgebra.
\section{Generalization to finite topological spaces}\label{Generalized results}


All the results in this paper remain true for the  finite connected topological spaces, with a small change on the definition of the coproduct $\delta$.
Indeed: let $\mathcal{T}=(X, \le_{\mathcal{T}}) $ be a finite connected topological space. We define
\begin{eqnarray*}
		\delta(\mathcal{T})=\frac{1}{|min(\mathcal{T})|}\sum_{Y \circledcirc  \mathcal{T}} \mathcal{T}_{|Y} \otimes \mathcal{T}_{|X\backslash Y},
	\end{eqnarray*}
	where $Y\circledcirc \mathcal{T}$ means that $Y\in \mathcal{T}$ such that:
	\begin{itemize}
	\item $\overline{Y}_-$ is a singleton included in $\hbox{min}(\overline{\mathcal{T}})$,
    \item and $Y$ is a branch of the set $\{x\in X, \hbox{ such that for all } y\in Y_-, \hbox{ we have } y<_{\mathcal{T}} x \hbox{ and } y \ngeq x \}$.
\end{itemize}
Here, the set $Y_-$ equal to the space $ \{x\notin Y, \hbox{ there exists } y \in Y \hbox{ such that } x \le_{\mathcal{t}} y \} $, $\overline{\mathcal{T}}=(X/\sim_{\mathcal{T}}, \le_{})$ is the poset defined on $X/\sim_{\mathcal{T}}$ corresponding to the partial order $\le$ induced by the quasi-order $\le_{\mathcal{T}}$, and $\overline{Y_-}=Y_-/\sim_{\mathcal{T}_{|Y_-}}$.
Let $\hbox{\hbox{Aut}}(\mathcal{T})$ be the subgroup of permutations of $X$ which are homeomorphisms with respect to $\mathcal{T}$, 
and let $G=\{ g\in Aut(\mathcal{T}), g\cdot \overline{v}=\overline{v} \hbox{ for all } v\in X \}$.
We notice that $G\subseteq Aut(\mathcal{T})$, $G$ is a normal subgroup and $Aut(\overline{\mathcal{T}})=Aut(\mathcal{T})/G$.\\

	We notice that the results obtained in Lemma \ref{lem}  also work for $Y, Z\circledcirc \mathcal{T}$. From $Y, Z\circledcirc \mathcal{T}$ and Lemma \ref{lem}, we get that both $\overline{Y}, \overline{Z} \circledcirc \overline{\mathcal{T}}$, then ($\overline{Y}=\overline{Z} \hbox{ or }\overline{Y}\cap \overline{Z}=\emptyset$), and $\hbox{min}(\overline{\mathcal{T}}_{|X\backslash Y})=\hbox{min}(\overline{\mathcal{T}})$. Then $Y\cap Z= \emptyset$ and $\hbox{min}(\mathcal{T}_{|X\backslash Y})=\hbox{min}(\mathcal{T})$.\\
Hence to show that the proposition \ref{prop} also works in topological cases, it suffices to change $ I, J \circledcirc P $ by $ Y, Z \circledcirc \mathcal{T} $ in the proof of proposition \ref{theorem.}.
To show that the result of Theorem \ref{Theorem} also works in connected finite topological spaces, we change $ I \circledcirc P $ by $ Y \circledcirc \mathcal{T} $ in the Proof \ref{prv}.\\

 Finally we prove the existence of a relation between $ \delta $ and $ \circleright $ in the topological case. Exactly: for
   $\mathcal{T}, \mathcal{T}^{'}$ and $\mathcal{T}^{''}$ three finite connected topological spaces, we have
  $$\langle \delta(\mathcal{T}),\mathcal{T}'\otimes \mathcal{T}''\rangle= \frac{1}{n|\min(\mathcal{T})|}  \langle \mathcal{T}, \mathcal{T}'\circleright \mathcal{T}''\rangle,$$
  where $n$ is the number of items in any bag $\overline{v}\in \hbox{ min}(\overline{\mathcal{T}''})$, such as $\mathcal{T}\approx \mathcal{T}'\searrow_v \mathcal{T}''$.\\
 The proof is similar to the proof of Theorem \ref{thm1}. Indeed:
   \begin{align*}
\langle \delta(\mathcal{T}),\mathcal{T}'\otimes \mathcal{T}''\rangle&=\frac{1}{|\min(\mathcal{T})|} \sum_{\substack{Y\circledcirc \mathcal{T}\\\mathcal{T}_{|Y}\approx \mathcal{T}',\\ 
\mathcal{T}_{|X \setminus Y} \approx \mathcal{T}''}}  \sigma(\mathcal{T}')\sigma(\mathcal{T}'')=\frac{1}{|\min(\mathcal{T})|} |A|
\end{align*}
with 
\[A=\{(Y,\sigma_1,\sigma_2)\mid Y\circledcirc \mathcal{T},\: \sigma_1:\mathcal{T}_{|Y}\stackrel{\sim}{\longrightarrow} \mathcal{T}', \sigma_2:\mathcal{T}_{|X \setminus Y}
\stackrel{\sim}{\longrightarrow} \mathcal{T}''\}.\]
     On the other hand 
 \begin{align*}
        \langle \mathcal{T}, \mathcal{T}'\circleright \mathcal{T}''\rangle &=\sum_{v\in \tiny{\hbox{min}}(\mathcal{T}'')}\langle \mathcal{T}, \mathcal{T}'\searrow_v \mathcal{T}''\rangle \\
        &=\sum \limits_{\underset{\mathcal{T}\approx \mathcal{T}'\searrow_v \mathcal{T}''}{v\in \tiny{\hbox{min}}(\mathcal{T}'')}} \sigma(\mathcal{T})=|B|,
    \end{align*}
    with
\[B=\{(v,\sigma)\mid v\in \min(\mathcal{T}''),\: \sigma:\mathcal{T}\stackrel{\sim}{\longrightarrow}\mathcal{T}' \searrow_v \mathcal{T}''\}.\]
  Let \[C=\{(u,\sigma)\mid u=\overline{v}, v\in \min(\mathcal{T}''),\: \sigma:\mathcal{T}\stackrel{\sim}{\longrightarrow}\mathcal{T}' \searrow_v \mathcal{T}''\}.\]
  We notice that for all $u, u'\in C$, we have $|u|=|u'|$. Hence $\frac{|B|}{|C|}$ is the number of items in any bag $\overline{v}\in \hbox{ min}(\overline{\mathcal{T}''})$, such as $\mathcal{T}\approx \mathcal{T}'\searrow_v \mathcal{T}''$.
    We define
    	\begin{eqnarray*}
		\ f:A&  \longrightarrow  C\\
		(Y,\sigma_1,\sigma_2)&\longmapsto& (u,\sigma),
	\end{eqnarray*}
 where $u=\overline{v}=\overline{\sigma_2(Y_{-})}$ and $\sigma$ is a graph isomorphism from $\mathcal{T}$ to $\mathcal{T}'\searrow_v \mathcal{T}''$ defined by
\[\sigma(x)=\begin{cases}
\sigma_1(x)\mbox{ if }x\in Y,\\
\sigma_2(x)\mbox{ otherwise}.
\end{cases}\]
 Now, we define a map
 \begin{eqnarray*}
		\ g:C&  \longrightarrow  A\\
		(u,\sigma)&\longmapsto& (Y,\sigma_1, \sigma_2),
	\end{eqnarray*}
	where $Y=X_{\sigma^{-1}(\mathcal{T}')}$, $\sigma_1=\sigma_{\mid \mathcal{T}_{\mid Y}}$
is a graph isomorphism from $\mathcal{T}_{\mid Y}$ to $\mathcal{T}'$ and $\sigma_2=\sigma_{\mid \mathcal{T}_{X\setminus Y}}$ is a graph isomorphism
from $\mathcal{T}_{\mid X\setminus Y}$ to $\mathcal{T}''$.\\
Let $(u, \sigma)\in C$, we put $g(u, \sigma)=(Y, \sigma_1, \sigma_2)$, where $Y=X_{\sigma^{-1}(\mathcal{T}')}$, $\sigma_1=\sigma_{\mid \mathcal{T}'}$ and $\sigma_2=\sigma_{\mid \mathcal{T}''}$.
Then $f\circ g(u, \sigma)=(u', \sigma')$, with $u'=\overline{w}=\overline{\sigma_2(Y_-)}$ and  $\sigma'$ is a graph isomorphism from $\mathcal{T}$ to $\mathcal{T}'\searrow_{w} \mathcal{T}''$ defined by
\[\sigma'(x)=\begin{cases}
\sigma_1(x)\mbox{ if }x\in Y,\\
\sigma_2(x)\mbox{ otherwise}.
\end{cases}\]
Then, $u'=\overline{\sigma_2(Y_-)}=u$ and $\sigma'=\sigma$. Hence $f\circ g=Id_C$.\\
Let $(Y, \sigma_1, \sigma_2)\in A$, we put $f(Y, \sigma_1, \sigma_2)=(u, \sigma)$, with $u=\overline{v}=\overline{\sigma_2(Y_-)}$ and $\sigma$ is a graph isomorphism from $\mathcal{T}$ to $\mathcal{T}'\searrow_{v} \mathcal{T}''$ defined by
\[\sigma(x)=\begin{cases}
\sigma_1(x)\mbox{ if }x\in Y,\\
\sigma_2(x)\mbox{ otherwise}.
\end{cases}\]
Then, $g\circ f(Y, \sigma_1, \sigma_2)=(Y', \sigma^{'}_1, \sigma^{'}_2)$, with $Y'=X_{\sigma^{-1}(\mathcal{T}')}$,  $\sigma^{'}_1=\sigma_{\mid \mathcal{T}_{\mid Y'}}$
is a graph isomorphism from $\mathcal{T}_{\mid Y'}$ to $\mathcal{T}'$ and $\sigma^{'}_2=\sigma_{\mid \mathcal{T}_{|X\setminus Y'}}$ is a graph isomorphism
from $\mathcal{T}_{\mid X\setminus Y'}$ to $\mathcal{T}''$.\\
Then, $Y'=X_{\sigma^{-1}(\mathcal{T}')}=Y$ and 
\[\begin{cases}
\sigma^{'}_1=\sigma_{\mid \mathcal{T}_{\mid Y'}}=\sigma_1,\\
\sigma^{'}_2=\sigma_{\mid \mathcal{T}_{|X\setminus Y'}}=\sigma_2.
\end{cases}\]
So $g\circ f= Id_A$.\\
Finally, $A$ and $C$ are in bijection.
 Hence
 $$\langle \delta(\mathcal{T}),\mathcal{T}'\otimes \mathcal{T}''\rangle=\frac{1}{|\min(\mathcal{T})|} |A|=\frac{1}{|\min(\mathcal{T})|} |C|= \frac{1}{|\min(\mathcal{T})|}|C|\frac{|B|}{|B|}= \frac{|C|}{|B||\min(\mathcal{T})|}  \langle \mathcal{T}, \mathcal{T}'\circleright \mathcal{T}''\rangle.$$

\textbf{Remark.}
All the results in this paper remain true for the posets (resp. topological spaces)  decorated by $F$, that is to say pairs $(\mathcal{A}, f)$, where $\mathcal{A}=(A, \le_{\mathcal{A}})$ is a poset (resp. topology) and $f:A\to F$ is a map.
\\
\\
\\

\textbf{Acknowledgements}: I would like to thanks Loïc Foissy and my Ph.D. thesis supervisors: Professor Dominique Manchon and Professor Ali Baklouti for their interest in this work and useful suggestions.\\

\textbf{Funding}: This work was supported by the University of Sfax, Tunisia.



\end{document}